\def\tph{\widetilde \pi_h}
\def\mbV{{\mathbb V}}
\def\tS{\widetilde S}
\def\Ats{(A^*)}
\def\dH#1{\dot{H}^{#1}}
\def\cQ{{\mathcal Q}}
\def\Forall{\qquad \hbox{for all }}
\def\beq#1\eeq{\begin{equation} #1 \end{equation}}
\def\bal#1\eal{\begin{aligned} #1 \end{aligned}}
\def\RR{{\mathbb R}}
\def\CC{{\mathbb C}}
\def\tH#1{\widetilde H^{#1}}
\chardef\atsign='100
\renewcommand{\Re}{\mathfrak{Re}}
\renewcommand{\Im}{\mathfrak{Im}}
\theoremstyle{plain}
\newtheorem{theorem}{Theorem}[section]
\newtheorem{lemma}[theorem]{Lemma}
\newtheorem{corollary}[theorem]{Corollary}
\theoremstyle{theorem}
\newtheorem{remark}{Remark}[section]
\newtheorem{assumption}{Assumption}
\newcommand{\step}[1]{\noindent\raisebox{1.5pt}[10pt][0pt]{\tiny\framebox{$#1$}}\xspace}
\newcommand{\modif}[1]{\textcolor{black}{#1}}
\begin{document}

\title[Numerical Approximation of Fractional Powers]
{Numerical Approximation of Fractional Powers of Regularly Accretive  Operators}

\author{Andrea Bonito}
\address{Department of Mathematics, Texas A\&M University, College Station,
TX~77843-3368.}
\email{bonito\atsign math.tamu.edu}

\author{Joseph E. Pasciak}
\address{Department of Mathematics, Texas A\&M University, College Station,
TX~77843-3368.}
\email{pasciak\atsign math.tamu.edu}

\date{\today}

\begin{abstract} 
We study the numerical approximation of fractional powers of accretive
operators in this paper.  Namely, if $A$ is the accretive operator
associated with a regular sesquilinear form $A(\cdot,\cdot)$ defined 
 on a Hilbert space 
$\mbV$ contained in  $L^2(\Omega)$, we approximate 
$A^{-\beta}$ for $\beta\in (0,1)$.   The fractional powers are defined 
in terms of the so-called Balakrishnan integral formula. 

Given a finite element approximation space $\mbV_h\subset \mbV$,
$A^{-\beta}$ is approximated by $A_h^{-\beta}\pi_h$ where $A_h$ is 
the operator associated with the form $A(\cdot,\cdot)$  restricted to
$\mbV_h$ and $\pi_h$ is the $L^2(\Omega)$-projection onto $\mbV_h$.
We first provide error estimates for $(A^{-\beta}-A_h^{-\beta}\pi_h)f$ in Sobolev
norms with index in [0,1] for appropriate $f$. These results depend on
elliptic regularity properties of variational solutions involving the 
form $A(\cdot,\cdot)$ and are valid for the case of less than full
elliptic regularity.   
We also construct and analyze an exponentially convergent sinc
quadrature approximation to the Balakrishnan integral defining 
$A_h^{-\beta}\pi_h f$.  Finally, the results
of numerical computations illustrating the proposed
method are given.
\end{abstract} 

\subjclass{65N30, 35S15, 65N15, 65R20, 65N12}

\maketitle

\section{Introduction.}

The mathematical study of integral or nonlocal operators has received much
attention due to their wide range of applications, see for instance \cite{ISI:000175019600004,MR2450437,MR0521262,MR2223347,MR2480109,MR1918950,MR660727,MR1727557,MR1709781}.

Let $\Omega$ be a  bounded domain in $\RR^d$, $d\geq 1$, with a Lipschitz 
continuous boundary $\Gamma$ which is the disjoint union of an open set
$\Gamma_N$ and its complement $\Gamma_D=\Gamma\setminus \Gamma_N$.
We define $\mbV$ to be the
functions in $H^1(\Omega)$ vanishing on $\Gamma_D$.
We then consider a sesquilinear form
$A(\cdot,\cdot)$ defined for $u,v\in  \mbV$
given by
\begin{equation}\label{e:sesq_form}
\bal
A(u,v)&:= \int_\Omega\bigg( \sum_{i,j=1}^d a_{i,j}(x) u_{i}(x) \overline{v_j(x)} 
+\sum_{i=1}^d (a_{i,0}(x) u_{i}(x) \overline{v(x)} \\
&\qquad + a_{0,i}(x) u(x) \overline{v_i(x)})  +a_{0,0}(x) u(x)\overline{v(x)}\bigg).
\eal
\end{equation}
Here the subscript on $u$ and $v$ denotes the partial derivative with
respect to $x_i$, $i=1,..,d$, and $\overline{v}$ denotes the complex
conjugate of $v$.  
We further assume that $A(\cdot,\cdot)$ is coercive and bounded 
(see, \eqref{coer} and \eqref{bdd} below).  
Such a sesquilinear form is called \emph{regular} \cite{kato1961}.

There is an unbounded operator $A$ on $L^2(\Omega)$ with domain of
definition $D(A)$ associated with a regular sesquilinear form (see \cite{kato1961} and Section~\ref{s:notation}
below). The unbounded operator associated with such a form is called a
\emph{regularly accretive} operator \cite{kato1961}.  For such operators,
the fractional powers are well defined, typically, in terms of
Dunford-Taylor integrals.  When
$0<\beta<1$, one can
also use the Balakrishnan formula
\cite{balakrishnan,kato1960,kato1961}:
\beq 
A^{-\beta}=\frac {\sin(\beta \pi)}\pi \int_0^\infty \mu^{-\beta} (\mu I
+A)^{-1}\, d\mu.
\label{A-beta}
\eeq
In this paper, 
we propose a numerical method for the approximation $A^{-\beta}f$ based
on the finite element
method with an approximation space $\mbV_h\subset \mbV$.
It is worth mentioning that a similar formula can be used to represent the solution of a Cauchy problem.
This is the focus of \cite{wenyu}.

Several techniques for approximating $S^{-\beta}f$ 
are available when $S$ is the operator associated with a Hermitian 
form (or symmetric and real
valued on a real-valued functional space).
\modif{The most natural technique} involves approximating $S^{-\beta}$ by
$S_h^{-\beta}$  where $S_h$ is a discretization of $S_h$, e.g., via the
finite element method using $\mbV_h$.  In this case, $S_h^{-\beta}f$ for
$f\in \mbV_h$ can
be expressed in terms of the discrete eigenvector expansion \cite{MR2300467,MR2252038,MR2800568}:
$$S_h^{-\beta}f= \sum_j c_j \lambda_{j,h}^{-\beta}\psi_{j,h} \quad
\hbox{where}\quad f= \sum_j c_j \psi_{j,h}.$$
Here $(\lambda_{j,h},\psi_{j,h})$ denote the eigenpairs of $S_h$.
An alternative approach is based on a representation of $S^{-\beta}f$ via a ``Neumann to Dirichlet'' map \cite{MR2354493}.
\modif{The numerical algorithm proposed and analyzed in \cite{Abner, AbnerPost} consists
of a finite element method in one higher dimension.
It takes advantage of
the rapid decay of the solution in the additional direction enabling
truncation to a bounded domain of modest size.}
A third approach which is valid for more general $A$, 
is based on finite element approximation with an analysis
\modif{employing} the Dunford-Taylor characterization of
$A^{-\beta} f$ \cite{FujitaSuzuki} (see, also, \cite{MR1255054,Ushijima}) and is most closely related to the approach which we will
take in this paper.  However, the analysis of \cite{FujitaSuzuki}
 only provides errors in
$L^2(\Omega)$, requires full elliptic regularity   and fails
to elucidate
the relation between the convergence rate and the smoothness of $f$.
For example, the result in \cite{FujitaSuzuki} does not hold for problems
on non-convex domains or problems with jumps in the higher order 
coefficients.

The approach that we shall take in this paper is based on
\eqref{A-beta}.
The introduction of finite elements on a subspace $\mbV_h\subset \mbV$ 
leads to a discrete approximation $A_h$ to $A$.   The finite element
approximation to \eqref{A-beta} is then given by
\begin{equation}\label{A-betah}
A_h^{-\beta}\pi_h:=\frac {\sin(\beta \pi)}\pi \int_0^\infty \mu^{-\beta} (\mu I
+A_h)^{-1}\pi_h \, d\mu,
\end{equation}
where $\pi_h$
is the $L^2(\Omega)$-projection onto $\mathbb V_h$.
In \cite{bp-fractional}, we proved the convergence in $L^2(\Omega)$ of
an equivalent version of this method when $A=S$ is real, symmetric and
positive definite. We also showed  
the exponential convergence of a sinc quadrature approximation.

The current paper extends the approach of \cite{bp-fractional} 
to the case when $A$ is a regularly accretive operator.
The proof provided in \cite{bp-fractional} is based on the fact that, in
the Hermitian case, the domain of $S^\gamma$, for $\gamma\in \RR$, is naturally characterized
in terms the decay of the coefficients in expansions involving the
eigenvectors of $S$.  Assuming elliptic regularity, it is then possible
show that $D(S^{s/2})$ for $0\leq s \leq 1+\alpha$ coincides  with
standard Hilbert spaces.  Here $\alpha$ is the regularity parameter (see
below).  Thus, norms of the operator \modif{$(\mu I + S)^{-1}$} acting between the
standard Sobolev spaces can be bounded using their series expansions, 
the norms in $D(S^{s/2})$, and Young's inequalities.
In contrast, the spaces $D(A^\gamma)$ cannot be characterized in such a
simple way when $A$ is not Hermitian.

The main result of this paper is the following error estimate (Theorem~\ref{FEinterror} and Remark~\ref{r:conv_s12}): for any $0\leq r \leq 1$ there exists $\delta \geq 0$ and a constant $C$ independent of $h$ such that for $f\in D(A^\delta)$
\modif{\begin{equation}\label{e:error_intro}
\| (A^{-\beta}-A_h^{-\beta}\pi_h)f\|_{H^{r}(\Omega)} \leq C
h^ {\alpha+\min(1-r,\alpha)} 
\end{equation}
with $C$ being replaced by $C \log(h^{-1})$ for certain combinations of
$r$, $\alpha$, $\delta$  and $\beta$.  
Here }
$\alpha>0$ is the so-called elliptic regularity pick-up which is the
regularity above $H^1(\Omega)$ expected for $A^{-1}f$ for appropriate
$f$, 
see Assumption~\ref{ellreg}.  Even in the case of
Hermitian $A$, the above result extends those in \cite{bp-fractional} to $r>0$.

This paper shows that the general approach for proving \eqref{e:error_intro} 
in \cite{bp-fractional} can be extended to the case of regularly accretive
$A$ with additional technical machinery.    Some of the 
most challenging issues involve the relationship between 
$D(A^{s/2})$, for $s\in
[0,1+\alpha]$ and fractional Sobolev spaces.   
The case of $s\in [0,1)$ is contained in the acclaimed 
  paper by Kato \cite{kato1961} showing that for regularly accretive
  operators, $D(A^{s/2})$ coincides with the interpolation space
  between $L^2(\Omega)$ and $\mbV$ defined using the real method.
The case of $s=1$ is the celebrated Kato Square Root Problem.  This is a
deep result which has been intensively studied
(see, \cite{Agranovich,AxelKeithMc,mcintosh90,MR1255054} and the references in
\cite{mcintosh90}).  The results in those papers give conditions when
one can conclude that $D(A^{1/2})=D((A^*)^{1/2})=\mbV$, \modif{where  $A^*$ stands for the adjoint of $A$}.
Motivated by the approach of Agranovich and
Selitskii \cite{Agranovich} for proving the Kato Square Root Problem, we 
show in this paper that under elliptic regularity assumptions,
$H^s(\Omega)\cap \mbV \subset D(A^{s/2}) $ and $ H^s(\Omega)\cap \mbV
\subset D((A^*)^{s/2})$ for $s\in [0,1+\alpha]$
with equality when  additional injectivity assumptions on $A^{-1}$ and
$(A^*)^{-1}$ hold.  With this information, the norms of $(\mu I+A)^{-1}$
acting between Sobolev spaces can be bounded in terms of the $L^2(\Omega)$
operator norm of $A^t (\mu I+A)^{-1}$ with $t\in [0,1]$.  This, in turn,
 (see
Lemma~\ref{l:both_bounds})  
can be bounded by interpolation using the fact that $D(A^t)$ coincides
with the interpolation scale between $L^2(\Omega)$ and $D(A)$ (using 
the complex method).

Similar results for $0\leq s \le 1$  are also required  for the finite element approximation $A_h$.
In a discrete setting, the question is to guarantee the existence of a constant $C$ independent of $h$ such that for all $v_h \in \mathbb V_h$
$$
C^{-1} \| v_h \|_{H^{s}(\Omega)} \leq \| A_h^{s/2} v_h\|_{L^2(\Omega)} \leq C  \| v_h \|_{H^{s}(\Omega)}.
$$
This is provided by Lemma~\ref{l:characterization_discrete} for $0\leq s
< 1$ and 
a  solution to the discrete Kato problem ($s=1$) is given in Theorem~\ref{t:discrete_kato}.

We also study a sinc quadrature approximation to $A_h^{-\beta}f$ for
$f\in \mbV_h$.  A change of integration variable shows that 
\beq
A_h^{-\beta} =\frac {\sin(\beta \pi)}\pi \int_{-\infty}^\infty 
e^{(1-\beta)y} (e^y I
+A_h)^{-1}\, dy. \label{cint}
\eeq
Motivated by \cite{lundbowers}, the sinc quadrature approximation to 
$A_h^{-\beta}$ is given by
\beq
\cQ^{-\beta}_k (A_h):= \frac{k\sin(\pi \beta)}{\pi}      \sum_{\ell=-N}^N e^{(1-\beta) y_\ell} (e^{y_\ell}I+  A_h)^{-1}. 
\label{quadh}
\eeq
Here $k:=1/\sqrt N$ is the quadrature step size, $N$ is a positive
integer \modif{and $y_\ell:= \ell k$ for $\ell = -N,...,N$.}
The standard tools related to the sinc quadrature together with the
characterization of 
$D(A^{s/2})$ for $s\in [0,1/2]$ mentioned above 
yield the quadrature error estimate (Remark~\ref{rem:MN})
$$ 
\|(A_h^{-\beta}- \cQ^{-\beta}_k(A_h))\pi_h \|_{\dH{2s}\to \dH{2s}}
\le C_Q
e^{-\pi^2/(2k)},
$$
where $C_Q$ is a constant independent of $k$ and $h$.

The outline of this paper is as follows.
In Section~\ref{s:notation}, we introduce the notations and properties related to operator calculus with non-Hermitian operators.
Section~\ref{s:symmetric} is devoted to the study of the Hermitian part of $A$ and the related dotted spaces. 
The latter is instrumental for the characterization of $D(A^{\frac s 2})$ discussed in Section~\ref{s:characterization}.
The finite element approximations are then introduced in Section~\ref{s:fem}, which also contains the proof of the error estimate \eqref{e:error_intro}.
This coupled with the exponentially convergent sinc quadrature studied in Section~\ref{rem:MN}, yields the final error estimate for the fully discrete and implementable approximation. 
We end this work with Section~\ref{s:numerical} providing a numerical illustration of the approximation of fractional convection-diffusion problems.

\section{Fractional Powers of non-Hermitian Operators} \label{s:notation}

We recall that  $\Omega$ is a bounded domain in $\RR^d$ with a Lipschitz 
continuous boundary $\Gamma$ which is the disjoint union of an open set
$\Gamma_N$ and its complement $\Gamma_D=\Gamma\setminus \Gamma_N$.    
Let $L^2(\Omega)$ be the space of complex valued functions on $\Omega$ with square integrable absolute value and denote by $\|.\|$ and $(.,.)$ the corresponding norm and Hermitan inner product.
Let $H^1(\Omega)$ be the Sobolev space of complex valued  functions on
$\Omega$ and set
$$\mathbb V:=\{v\in H^1(\Omega),\ v=0 \hbox{ on } \Gamma_D\},$$
the restriction of $H^1(\Omega)$ to functions with vanishing traces on $\Gamma_D$.
We implicitly assume that $\Gamma_D$ is such that the trace operator
from $H^1(\Omega)$ is bounded into $L^2(\Gamma_D)$, e.g., 
$\Gamma_D$ does 
not contain any isolated sets of zero $d-2$ dimensional measure.
We denote  $\|.\|_{1}$ and $\|.\|_{\mathbb V}$ to be 
the norms on $H^1(\Omega)$ and $\mathbb V$ defined respectively by
\begin{equation*} 
\| v \|_{1}:= \left(\int_\Omega |v|^2+ \int_\Omega | \nabla v|^2\right)^{1/2}, \qquad 
\| v \|_{\mathbb V}:= \left( \int_\Omega | \nabla v|^2\right)^{1/2}.
\end{equation*}
For convenience, we avoid the situation where the variational space
requires $L^2(\Omega)$-orthogonalization, i.e., the Neumann problem 
without a zeroth order term.

For a bounded operator  $G:X(\Omega)\to Y(\Omega)$ between two Banach spaces $(X(\Omega),\|.\|_X)$ and $(Y(\Omega),\|.\|_Y)$ we write
$$
\| G \|_{X\to Y} := \sup_{u \in X, \| u \|_X =1} \| Gu\|_Y
$$
and, in short, $\| G \|:= \| G \|_{L^2 \to L^2}$.

Throughout this paper we use the notation $A \preceq B$ to denote $A\leq
C B$ with a constant $C$ independent of $A$, $B$ and the discretization 
mesh parameter
$h$ (defined later). 
When appropriate, we shall be more explicit on the dependence of $C$.

Consider the sesquilinear form \eqref{e:sesq_form}
for $ u,v$ in $\mathbb V$.
We assume that 
 $A(\cdot,\cdot)$ is strongly
elliptic  and
bounded.
The assumption
of strong ellipticity is the existence of a positive constant $c_0$
satisfying 
\beq
\Re(A(v,v))\ge c_0 \|v\|^2_{\mathbb V}, \Forall v\in \mathbb V.
\label{coer}
\eeq
The boundedness of $A(\cdot,\cdot)$ on $\mathbb V$ implies the existence of 
a positive  constant $c_1$ satisfying
\beq
|A(u,v)|\le  c_1 \|u\|_{\mathbb V}\|v\|_{\mathbb V}, \Forall u,v\in \mathbb V.
\label{bdd}
\eeq
The conditions \eqref{coer} and \eqref{bdd} imply that the sesquilinear form
$A(\cdot,\cdot)$  is regular on $\mbV$ (see, Section 2 of
\cite{kato1961}).

Following \cite{kato1961}, we define the Hermitian forms 
$$ \Re A(u,v) := \frac {A(u,v)+\overline{A(v,u)} } 2 \quad \hbox{ and }
\quad
\Im A(u,v) := \frac {A(u,v)-\overline{A(v,u)} }{2i}.$$
Note that \eqref{coer} implies that  $\Re A(u,u) $ is equivalent to 
$\|u\|_\mbV^2$, for all $v\in \mbV$ and
\eqref{bdd} is equivalent to 
\beq
|\Im A(u,u)| \le \eta\, \Re A(u,u), \Forall u\in \mbV,
\label{index}
\eeq
for some $\eta>0$.  The smallest constant $\eta$ above is called the
index of $A(\cdot,\cdot)$.

We now define operators associated with regular sesquilinear forms. 
Let $A$ be the unique closed m(aximal)-accretive operator 
of Theorem 2.1 of \cite{kato1961}, which is  defined as follows.
We set 
$\widetilde T:L^2(\Omega)\rightarrow \mbV$ by 
$$A(\widetilde Tu,\phi)=(u,\phi),\Forall \phi\in \mbV$$
(which is uniquely defined by the Lax-Milgram Theorem)
and set 
$$D(A):=\hbox{Range}(\widetilde T)\subset \mbV.$$  
As $\widetilde T$ is one to one, 
we define $Aw :=\widetilde T^{-1}w$, for $w\in D(A)$.
The operator $A$ associated with a regular sesquilinear form is said to be \emph{regularly accretive}.

\modif{We denote by $\mathbb V_a^*$ denotes the set of
bounded antilinear functionals on $\mathbb V$.}
It will be useful to consider also a related bounded operator 
$T_a: \mbV_a^* \rightarrow \mbV$ defined for $F\in
\mbV_a^*$ by the unique solution (Lax-Milgram again) to 
$$
A(T_aF,\phi)=\langle F,\phi \rangle,\Forall \phi \in \mbV.
$$
Here $\langle \cdot,\cdot\rangle $ denotes the antilinear
functional/function pairing.
The operator $T_a$ is a bijection of $\mbV_a^*$ onto $\mbV$ and we denote its inverse 
 by $A_a$.
>From the definition of $T_a$, we readily deduce that for $u\in \mbV$, $A_a u$ satisfies
\beq
\langle A_a u,v \rangle = A(u,v), \Forall v\in \mathbb V.
\label{Fa}
\eeq

It is also clear from the 
definition of $A$ that $u\in D(A)$ if and only if $A_au$ extends to a
bounded antilinear functional on $L^2(\Omega)$ and, then, 
$$(Au,v) =\langle A_au,v\rangle  = A(u,v),
\Forall u\in D(A),v\in \mathbb V.
$$

\modif{The above constructions can be repeated for adjoints defining
$\mathbb V_l^*$ the set of
bounded linear functionals on $\mathbb V$,} $\widetilde T^*:L^2(\Omega)\rightarrow \mbV$, $D(A^*):=\hbox{Range}(\widetilde T^*) \subset \mbV$,
$A^*:= (\widetilde T^*)^{-1}:D(A^*) \rightarrow L^2(\Omega)$, $T^*_l:\mbV_l^* \rightarrow \mbV$ and $A^*_l:=(T^*_l)^{-1}$.
In this case, for $v \in \mbV$, $A_lv$ satisfies
\beq
\langle  u,A_l v \rangle = A(u,v), \Forall u\in \mathbb V.
\label{Fa*}
\eeq
with $\langle \cdot,\cdot\rangle $ also denoting the function/linear
functional pairing.
We also have that $v\in D(A^*)$ if and only if $A_lv$ extends to a
bounded linear functional on $L^2(\Omega)$ and, then,
$$(u,A^*v) =\langle u,A_l^*v\rangle  = A(u,v),
\Forall u\in \mathbb V, v\in D(A^*).$$
Of course, these definitions imply that for  $u\in D(A)$ and $v\in D(A^*)$,
$(Au,v)=(u,A^*v)$.

By construction, the operators $A$ and $A^*$ defined from regular sesquilinear
forms are regularly accretive (cf. \cite{kato1961}). 
They satisfy
the following theorem (see, also \cite{FujitaSuzuki}):

\begin{theorem} [Theorem 2.2 of \cite{kato1961}] \label{resbound}
Let $A$ be the unique regularly accretive operator 
defined from a regular sesquilinear form
$A(\cdot,\cdot) $ with index $\eta$. Set $\omega:=\arctan(\eta)$.
Then the numerical range and the 
spectrum
of $A$ are  subsets of the sector $S_{\omega}:=\{z\in \CC\ : \ 
|\arg z|\leq\omega\}$.  Further, the resolvent set $\rho(A)$
of $A$ contains $S_{\omega}^c:=\CC \setminus S_{\omega}$ and on this set the resolvent 
$R_z(A):= (A-zI)^{-1}$ satisfies  
$$\|R_z(A)\| \le \left \{ 
\begin{array}{ll} 
[|z|\sin(\arg(z)-\omega)]^{-1} &\qquad
  \text{for} \quad \omega<\arg(z) \le \frac \pi 2 +\omega,\\ 
|z|^{-1} & \qquad \text{for} \quad \arg(z)>\frac \pi 2 +\omega.
\end{array} \right . 
$$
The result also holds for $A$ replaced by $A^*$.
\end{theorem}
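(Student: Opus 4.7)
The plan is to verify three ingredients in turn: the numerical range inclusion, an a priori lower bound on $\|(A-zI)u\|$ for $z$ outside the sector, and surjectivity of $A-zI$. For the first, any nonzero $u\in D(A)$ satisfies $(Au,u)=A(u,u)$; \eqref{coer} makes its real part positive while \eqref{index} gives $|\Im A(u,u)|\le\eta\,\Re A(u,u)$, so $|\arg((Au,u))|\le\arctan(\eta)=\omega$ and hence $W(A)\subset S_\omega$. The adjoint form $\overline{A(v,u)}$ inherits the same coercivity constant and index, so the identical computation gives $W(A^*)\subset S_\omega$.

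For the second, fix $z\in S_\omega^c$ and $u\in D(A)\setminus\{0\}$. The identity
\begin{equation*}
((A-zI)u,u)=\|u\|^2\bigl[(Au,u)/\|u\|^2-z\bigr]
\end{equation*}
combined with $(Au,u)/\|u\|^2\in W(A)\subset S_\omega$ yields $|((A-zI)u,u)|\ge d(z)\|u\|^2$, where $d(z):=\mathrm{dist}(z,S_\omega)$. Cauchy--Schwarz then implies $\|(A-zI)u\|\ge d(z)\|u\|$. Elementary planar geometry identifies $d(z)=|z|\sin(\arg(z)-\omega)$ when $\omega<\arg(z)\le\pi/2+\omega$ (the perpendicular from $z$ to the ray $\{\arg=\omega\}$ has its foot on that ray) and $d(z)=|z|$ when $\arg(z)>\pi/2+\omega$ (the nearest point of $\overline{S_\omega}$ is the origin); the regime $\arg(z)<-\omega$ is handled symmetrically.

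For the third, the a priori estimate shows that $A-zI$ is injective with closed range, so it suffices to establish density of the range in $L^2(\Omega)$. Suppose $v\in L^2(\Omega)$ is orthogonal to $(A-zI)D(A)$. Substituting $u=\widetilde T f$ for arbitrary $f\in L^2(\Omega)$ and using $A\widetilde T f=f$ reduces the orthogonality to $(f-z\widetilde T f,v)=0$, whence $(f,v)=z(\widetilde T f,v)=(f,\bar z\,\widetilde T^* v)$, the last step invoking the identification of $\widetilde T^*$ with the Hilbert-space adjoint of $\widetilde T$ (a direct consequence of the defining relations of Section~\ref{s:notation}). Hence $v=\bar z\,\widetilde T^*v\in\mathrm{Range}(\widetilde T^*)=D(A^*)$ with $A^*v=\bar z v$, placing $\bar z\in W(A^*)\subset S_\omega$; by symmetry of $S_\omega$ about the real axis this contradicts $z\in S_\omega^c$ unless $v=0$. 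Therefore $A-zI$ maps $D(A)$ bijectively onto $L^2(\Omega)$ with $\|R_z(A)\|\le d(z)^{-1}$, yielding the stated bounds. The assertion for $A^*$ follows by interchanging the roles of $A$ and $A^*$ throughout.

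The main technical subtlety lies in the surjectivity step: done cleanly, it rests on identifying the form-theoretic adjoint $\widetilde T^*$ with the Hilbert-space adjoint of $\widetilde T$, which promotes the orthogonality relation to the eigenvalue equation $A^*v=\bar z v$ so that $W(A^*)\subset S_\omega$ can deliver the contradiction. Everything else reduces to the sectorial geometry of $S_\omega$ and a Cauchy--Schwarz estimate.
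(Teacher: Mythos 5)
Your argument is correct, but it is worth noting that the paper does not prove this statement at all: Theorem~\ref{resbound} is quoted verbatim from Kato (Theorem 2.2 of \cite{kato1961}), where the standard route is through the theory of m-sectorial operators --- maximal accretivity already places the open left half-plane in $\rho(A)$, and the numerical-range containment $W(A)\subset S_\omega$ then extends the resolvent set to all of $S_\omega^c$ with the bound $\|R_z(A)\|\le \mathrm{dist}(z,S_\omega)^{-1}$ by a continuity/connectedness argument. You instead reconstruct the result from scratch: the same numerical-range computation from \eqref{coer} and \eqref{index}, the Cauchy--Schwarz lower bound $\|(A-zI)u\|\ge \mathrm{dist}(z,S_\omega)\|u\\|$ with the correct sector geometry, and then surjectivity proved directly by testing a vector orthogonal to the range against $u=\widetilde T f$ and upgrading the resulting relation $v=\bar z\,\widetilde T^*v$ to the eigenvalue equation $A^*v=\bar z v$, which contradicts $W(A^*)\subset S_\omega$. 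This duality step is legitimate: the identity $(\widetilde T f,g)=(f,\widetilde T^*g)$ does follow from the defining relations of Section~\ref{s:notation} (equivalently from $(Au,v)=(u,A^*v)$ stated there), so $\widetilde T^*$ is indeed the Hilbert-space adjoint of $\widetilde T$. Two small points you leave implicit but which are covered by the paper's setup: concluding that the range of $A-zI$ is closed from the lower bound uses that $A$ is a closed operator (immediate here since $A=\widetilde T^{-1}$ with $\widetilde T$ bounded, and the paper takes $A$ to be the closed m-accretive operator of \cite{kato1961}), and the case $z=0$ is vacuous since $0$ lies in the sector (or is handled trivially by $v=0$). With those remarks, your proof is a complete, self-contained alternative to the citation; what the citation buys is brevity and the general m-sectorial framework, while your argument has the merit of using only the Lax--Milgram constructions already introduced in the paper.
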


\begin{remark} \label{cz2}
It easily follows from \eqref{coer} that for $\Re(z)\le c_0/2$,
$$\frac{c_0}2 \|u\|_1^2 \le |A(u,u)-z(u,u)|$$
which implies that 
$$\|R_z(A)f\|_1 \le \frac 2{c_0}\|f\|.$$
\end{remark}

It follows from Theorem~\ref{resbound} and Remark~\ref{cz2} that the Bochner 
integral appearing  in \eqref{A-beta}, for $\beta\in (0,1)$,
is well defined and gives a bounded operator $A^{-\beta}$
on
$L^2(\Omega)$.
Fractional powers for positive indices can be defined from those with 
negative indices.  
For $\beta\in
(0,1)$, 
$$D(A^\beta)=\{ u\in L^2(\Omega) \ : \ A^{\beta-1}u\in D(A)\}$$
and  $A^\beta u := A(A^{\beta-1})u$ for $u\in D(A^\beta)$.

An alternative but equivalent definition of fractional powers of 
positive operators (for $\beta\in (-1,1)$) is given in, e.g., 
\cite{lunardi}.  We shall recall some additional properties provided 
there (for $\beta\in(0,1)$).
Theorem 4.6 
of \cite{lunardi} implies that $D(A)\subset D(A^\beta)$ and 
for $v\in D(A)$, $A^\beta v = A^{\beta-1}
Av $ and $Av=A^\beta A^{1-\beta}v= A^{1-\beta} A^{\beta}v$.  Also, 
for any $\beta>0$,  $D(A^\beta) =  \{A^{-\beta}v\ : v\in L^2(\Omega)\}$ and
$A^\beta v= (A^{-\beta})^{-1}v$ for $v\in D(A^\beta)$.

Set $w:=A^\beta v$.  The last statement in the previous paragraph 
implies that $w = (A^{-\beta})^{-1}v$. Now as 
$ (A^{-\beta})w=v$,
$$\|v\| = \| A^{-\beta} w\| \le \| A^{-\beta} \| \|w\|
= \| A^{-\beta} \| \|A^\beta v\|.$$
This implies that we can take 
\begin{equation}\label{d:norm_DA}
\|v\|_{D(A^s)}:=\|A^s v\|
\end{equation}
 as our norm on $D(A^s)$ for $s\in [0,1)$.

Using the above and techniques from functional calculus \cite{haase},
we can conclude similar facts concerning products of fractional powers 
and the resolvent, namely,
\beq A^{-\beta} R_z(A) u = R_z(A) A^{-\beta} u, \Forall u\in
L^2(\Omega),\beta\ge 0,
\label{anegcom}
\eeq
and 
\beq A^{\beta} R_z(A) u = R_z(A) A^{\beta} u, \Forall u\in
D(A^\beta),\beta\in  [0,1].
\label{aposcom}
\eeq

We shall also connect fractional powers of 
operators with their adjoints in the $L^2(\Omega)$-inner product.
We have already noted that for $u\in D(A)$ and $v\in D(A^*)$, 
$(Au,v)=(u,A^*v)$.   This holds for fractional powers as well, i.e.,
$(A^{\beta} u,v)=(u,(A^*)^\beta v)$
provided that $u\in D(A)$ and $v\in D(A^*)$.

\section{The Hermitian Operator and the Dotted Spaces.}\label{s:symmetric}

For notational simplicity, we set $S(u,v):=\Re A(u,v)$.
As already noted, $S(u,u)^{1/2} $ provides an equivalent norm 
on $\mbV$ and we redefine $\|u\|_\mbV:=S(u,u)^{1/2}$.
As $S(\cdot,\cdot)$ is regular (i.e. satisfies \eqref{coer} and \eqref{bdd} with $A(.,.)$ replaced by $S(.,.)$), 
there is an
associated (m-accretive) unbounded operator $S$. The latter is defined similarly as $A$ from $A(.,.)$ in Section~\ref{s:notation} (see also \cite{kato1961}).
This is, upon first defining $T_S:L^2(\Omega)\rightarrow \mbV$ by 
$T_Sf:=w$ where $w\in \mbV$ is the unique solution of 
$$S(w,\phi)=(f,\phi) \Forall \phi\in \mbV$$
and then setting $D(S):=\hbox{Range}(T_S)$, $S:=T_S^{-1}$.

In addition, as $S(\cdot,\cdot)$ 
is symmetric and coercive, $S$ is self-adjoint and satisfies
$$S(u,v)=(S^{1/2}u,S^{1/2}v).$$

We  consider the Hilbert scale
of spaces defined by  $\dH s:=D(S^s)$ for $s\ge 0$.  
The above discussion implies 
\begin{equation}\label{e:dot_endpoints}
\dH1=\mbV \qquad \text{and} \qquad \dH0=L^2(\Omega).
\end{equation}
Moreover, the operator $T_S$ is a compact Hermitian operator on $L^2(\Omega)$
and so there is a countable $L^2(\Omega)$-orthonormal 
basis  $\{\psi_i, \ i=1,\ldots,\infty\} $ of
eigenfunctions for $T_S$.    The corresponding eigenvalues $\{\mu_i\}$ can
be ordered so that they are non-increasing with limit 0 and we set 
$\lambda_i=\mu_i^{-1}$.   This leads to a realization 
of $\dH s$ in terms of eigenfunction expansions, namely, for $s\in (0,1)$
\begin{equation}\label{e:dot_intermediate}
D(S^s):=\dH s =\bigg\{w=\sum_{j=1}^\infty (w,\psi_j) \psi_j 
\in L^2(\Omega) \ : \ \sum_{j=1}^\infty |(w,\psi_j)|^2
\lambda_j^s<\infty\bigg \}.
\end{equation}
The spaces $\dH s $ are Hilbert spaces with inner product
$$(u,v)_s := \sum_{j=1}^\infty \lambda_j^s (u,\psi_j) \overline
{(v,\psi_j)}.$$
Moreover, they are a Hilbert scale of spaces and are  also 
connected by the real
interpolation method.

As already mentioned $\dH 1 = \mathbb V$ so that the set of antilinear functionals on $\mathbb V$, denoted $\mbV^*_a$, can be characterized by 
$$
\mbV^*_a=\dH{-1}_a: =\bigg \{
\bigg \langle \sum_{j=1}^\infty c_j \psi_j,\cdot\bigg \rangle \ :  \
\sum_{j=1}^\infty |c_j|^2
\lambda_j^{-1}<\infty\bigg\},
$$
where $\bigg \langle \sum_{j=1}^\infty c_j \psi_j,  \sum_{i=1}^\infty d_j \psi_j  \bigg \rangle := \sum_{j=1}^\infty c_j \overline{d}_j$.

In addition, the set of antilinear functionals on $L^2(\Omega)$,
 denoted by
$L^2(\Omega)^*_a$, is given by 
$$L^2(\Omega)_a^*=\dH{0}_a: =\bigg \{\bigg\langle \sum_{j=1}^\infty  c_j
\psi_j,\cdot\bigg
\rangle\ : \ 
\sum_{j=1}^\infty |c_j|^2
<\infty\bigg\}.$$
Hence, the intermediate spaces are defined by
$$\dH{-s}_a: =\bigg \{
\bigg \langle \sum_{j=1}^\infty c_j \psi_j,\cdot\bigg \rangle \ :  \
\sum_{j=1}^\infty |c_j|^2
\lambda_j^{-s}<\infty\bigg\}$$
and are Hilbert spaces with the obvious inner product.  These also 
are a Hilbert scale of interpolation spaces for $s\in [-1,0]$.
In addition, these spaces are dual to $\dH s$,
i.e.,  if $s\in [0,1]$ and 
$$\langle w,\cdot \rangle = \bigg \langle \sum_{j=1}^\infty c_j \psi_j,\cdot\bigg \rangle \in \dH {-s}_a$$
then
\beq
\|w\|_{\dH {-s}_a}=\bigg(\sum_{j=1}^\infty \lambda_j^{-s} |c_j|^2
\bigg)^{1/2}
=\sup_{\theta\in \dH s} \frac {\langle w,\theta\rangle}
{\|\theta\|_{\dH s }}
\label{dual}
\eeq
and if $\theta\in {\dH s}$, 
\beq
\|\theta\|_{\dH {s}}
=\sup_{w\in \dH {-s}_a} \frac {\langle w,\theta\rangle}
{\|w\|_{\dH {-s}_a }}.
\label{dualother}
\eeq

Considering linear functionals instead of antilinear functionals 
and replacing $\mbV_a^*$ and $L^2(\Omega)^*_a$ by  
spaces of linear functionals, $\mbV_l^*$ and $L^2(\Omega)_l^*$ gives
rise to the analogous  Hilbert scale with $\dH{-1}_l=\mbV_l^*$ and 
$\dH0_l=L^2(\Omega)_l^*$ as endpoints with  equalities similar to 
\eqref{dual} and \eqref{dualother} holding for these as well.

As we shall see in Section~\ref{s:characterization}, $D(A^{s/2})$ relates either to $\dH s$ or $H^s(\Omega)\cap \mbV$
depending on whether $s>0$ is smaller or greater that $1$. 
In order to unify the presentation, we introduce the following spaces equipped with their natural
norms:
$$ \tH {s}:= \left \{\bal
\dH s &\qquad \hbox{for }s\in [0,1],\\
H^s(\Omega)\cap \mbV &\qquad \hbox{for }s\ge 1.
\eal \right.
$$

\section{Characterization of $D(A^{\frac s 2})$} \label{s:characterization}

In this section, we first observe that the dotted spaces for $s\in [0,1)$ 
coincide with the domains of 
fractional powers of $A$ and $A^*$ (c.f., \cite{kato1961}). In addition, 
we note that the dotted spaces $\dH{-s}_a$ and $\dH{-s}_l$ can be 
identified with the dual
space of  $\dH s$, for $s\in [0,1]$.  The case of $\tH s:=\dH s$, $s\in (0,1)$ 
is addressed in the following theorem which is an immediate consequence
of Theorem~3.1 of \cite{kato1961}.

\begin{theorem}[Characterization of $D(A^{\frac s 2})$ for $0\leq s <1$] \label{cont0_1_2}   
Assume that \eqref{coer} and  \eqref{bdd} hold.
  Then for $s\in [0,1 )$,
    $$
D(A^{s/2})=D((A^*)^{s/2})=D(S^{s/2})=\tH s,
$$
with equivalent norms.
\end{theorem}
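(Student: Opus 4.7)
The plan is to deduce the result directly from Theorem~3.1 of \cite{kato1961}, applied separately to the three regular sesquilinear forms $A(\cdot,\cdot)$, the adjoint form $(u,v)\mapsto \overline{A(v,u)}$, and the Hermitian form $S(\cdot,\cdot)=\Re A(\cdot,\cdot)$, and then to identify the resulting intermediate space with $\dH s$ via the spectral decomposition of $S$ from Section~\ref{s:symmetric}.

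The preliminary step is to verify that all three forms are regular on $(L^2(\Omega),\mbV)$. For $A(\cdot,\cdot)$ and its adjoint this is exactly \eqref{coer} and \eqref{bdd}. For $S(\cdot,\cdot)$, coercivity holds since $S(u,u)=\Re A(u,u)$ is equivalent to $\|u\|_\mbV^2$ by \eqref{coer}, and boundedness follows from the triangle inequality $|S(u,v)|\le |A(u,v)|+|A(v,u)|\le 2c_1\|u\|_\mbV\|v\|_\mbV$ using \eqref{bdd}.

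Next, I would invoke Kato's theorem: for any regularly accretive operator $B$ associated with a regular form on $(L^2(\Omega),\mbV)$ and for $\alpha\in[0,1/2)$, the domain $D(B^\alpha)$ coincides with $D((B^*)^\alpha)$ and, up to equivalent norms, with the real-method interpolation space $[L^2(\Omega),\mbV]_{2\alpha,2}$. The crucial observation is that this interpolation space is intrinsic to the pair $(L^2(\Omega),\mbV)$ and does not depend on the particular form. Applying this with $B=A$ and with $B=S$ (noting that $S^*=S$), I obtain, setting $s=2\alpha\in[0,1)$,
$$D(A^{s/2})=D((A^*)^{s/2})=D(S^{s/2})=[L^2(\Omega),\mbV]_{s,2}$$
with equivalent norms.

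The final step is to identify this common space with $\tH s=\dH s$. Using the $L^2(\Omega)$-orthonormal eigenbasis $\{\psi_j\}$ of $S$ and the characterization \eqref{e:dot_intermediate}, a direct $K$-functional computation on the Hilbert scale $\{\dH t\}_{t\in[0,1]}$ yields $[\dH 0,\dH 1]_{s,2}=\dH s$ for $s\in[0,1)$, and by definition $\tH s=\dH s$ in this range. The main obstacle is essentially nonexistent: Kato's theorem carries all the substance, and the range $s\in[0,1)$ deliberately avoids the endpoint $s=1$ (the celebrated Kato square-root problem, treated separately later in the paper). The only care required is to recognize that Kato's intermediate space is intrinsic to $(L^2(\Omega),\mbV)$ so that the three choices of form yield the same space.
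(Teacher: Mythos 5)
Your proposal is correct and takes essentially the same route as the paper, which proves the theorem simply by citing it as an immediate consequence of Theorem~3.1 of Kato's 1961 paper. In fact Kato's theorem already yields $D(A^{s/2})=D((A^*)^{s/2})=D(S^{s/2})$ directly (the real parts of $A(\cdot,\cdot)$ and of the adjoint form are both $S(\cdot,\cdot)$), so your detour through the interpolation couple $(L^2(\Omega),\mbV)$ and the $K$-functional identification with $\dH{s}$, while valid, is not needed: the remaining identity $D(S^{s/2})=\dH{s}=\tH{s}$ holds by the spectral definition of the dotted spaces in Section~\ref{s:symmetric}.
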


The identification of the negative dotted spaces with with the duals 
is given in the following remark.

\begin{remark}[Characterization of Negative Spaces] \label{identify}
We identify $f\in L^2(\Omega)$ with the functional 
$F^f_a\in L^2(\Omega)^*_a$ defined by 
$$\langle F^f_a,\theta\rangle=(f,\theta),\Forall \theta\in L^2(\Omega).$$
It follows from Theorem~\ref{cont0_1_2} and \eqref{dual} that   the norms 
$\|F^f_a\|_{\dH {-2s} _a} $ and $\|A^{-s} f\|$ are equivalent (for $s\in
[0,1/2)$). Indeed,
$$\bal 
\|F_a^f\|_{\dH {-2s} _a}&=\sup_{\phi\in \dH {2s} } \frac {\langle F_a^f,\phi 
\rangle } {\|\phi\|_{\dH {2s}}}\approx \sup_{\phi\in \dH {2s} } \frac  {(f,\phi)} 
 {\|(A^*)^{s}\phi\|} \\
&=\sup_{\theta\in L^2(\Omega) } \frac {
(f,(A^*)^{-s}\theta)}
 {\|\theta\|_{\dH {2s}}} =\|A^{-s} f\|.
\eal
$$
Here $\approx$ denotes comparability with constants independent of $f$.
 For simplicity, we shall write 
$\|f\|_{\dH {-2s} _a}$ instead of $\|F^f_a\|_{\dH {-2s} _a} $.
We can identify $L^2(\Omega)$ with $L^2(\Omega)^*_l$ in an analogous way
and similar norm equivalences hold.
\end{remark}

Elliptic regularity is required to obtain convergence rates for finite
element approximation.   
Such results for boundary value problems have been
studied by many authors \cite{brbacuta,costabel,dauge,MR1173209,
  kellogg,nic1,Kondratev,nazplam,nicaise}. 
The follow assumption
illustrates the type of elliptic regularity  results available.
 
\begin{assumption}[Elliptic Regularity]\label{ellreg}
We shall assume elliptic
regularity for the form $A(\cdot,\cdot)$ 
with indices $\alpha \in (0,1]$. Specifically, we assume that for $s\in (0,\alpha]$,
$T_a$ is a bounded map of $\dot H_a^{-1+s}$ into $\tH{1+s}(\Omega)$
and $T_l^*$ is a bounded map of  $\dot H_l^{-1+s}$ into $\tH{1+s}$.
\end{assumption}
 
The above assumptions imply the following theorem.

\begin{theorem}[Property of $D(A^{\frac s 2})$ for $s > 1$]
 \label{t:characterization_hdot_onewayc}
Assume that \eqref{coer}, \eqref{bdd} and
 the elliptic regularity assumptions (Assumption~\ref{ellreg})
 hold.  Then for $s\in (1,1+\alpha]$,
$$
D(A^{s/2}) \subset \tH {s} 
\qquad \text{and} \qquad  D((A^*)^{ s/2}) \subset \tH s, 
$$
with continuous embeddings.
\end{theorem}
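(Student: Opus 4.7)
The plan is to realize $u \in D(A^{s/2})$ as $T_a F_u$ for an antilinear functional $F_u$ lying in the negative-order dotted space $\dH{-1+\sigma}_a$ with $\sigma:=s-1 \in (0,\alpha]$, and then invoke the elliptic regularity assumption. Since $(2-s)/2 = (1-\sigma)/2 \in [0,1/2)$, Theorem~\ref{cont0_1_2} applied to $A^*$ identifies $D((A^*)^{(2-s)/2}) = \tH{2-s} = \dH{1-\sigma}$ with equivalent norms. For $u \in D(A^{s/2})$, I would define
\begin{equation*}
\dual{F_u}{\phi} := \bigl(A^{s/2} u, (A^*)^{(2-s)/2} \phi\bigr), \qquad \phi \in \dH{1-\sigma}.
\end{equation*}
The Cauchy--Schwarz inequality together with the norm equivalence above yields $|\dual{F_u}{\phi}| \preceq \|A^{s/2}u\|\,\|\phi\|_{\dH{1-\sigma}}$, so by the dual characterization \eqref{dual} one has $F_u \in \dH{-1+\sigma}_a$ with $\|F_u\|_{\dH{-1+\sigma}_a} \preceq \|A^{s/2} u\|$.

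Next I would verify that $u = T_a F_u$. For $u \in D(A) \subset D(A^{s/2})$ this is a direct computation: using the $L^2$-adjoint relation of fractional powers $(A^{(2-s)/2} w, \phi) = (w, (A^*)^{(2-s)/2} \phi)$, valid for $w \in D(A^{(2-s)/2})$ and $\phi \in D((A^*)^{(2-s)/2})$, combined with the semigroup identity $A^{(2-s)/2} A^{s/2} = A$ on $D(A)$, one obtains for every $\phi \in \mbV$
\begin{equation*}
\dual{F_u}{\phi} = (A^{(2-s)/2} A^{s/2} u, \phi) = (A u, \phi) = A(u,\phi) = \dual{A_a u}{\phi}.
\end{equation*}
Hence $F_u$ agrees with $A_a u$ on $\mbV$, and therefore $T_a F_u = T_a A_a u = u$ for every $u \in D(A)$. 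The elliptic regularity assumption (Assumption~\ref{ellreg} with regularity index $\sigma \in (0,\alpha]$) gives boundedness of $T_a: \dH{-1+\sigma}_a \to \tH{1+\sigma}$; in particular, the map $u \mapsto T_a F_u$ is continuous from $D(A^{s/2})$ (with the norm \eqref{d:norm_DA}) into $\tH{1+\sigma} \hookrightarrow L^2(\Omega)$. Since $D(A)$ is a core for $A^{s/2}$ and hence dense in $D(A^{s/2})$, the identity $u = T_a F_u$ extends by continuity to all of $D(A^{s/2})$, which implies
\begin{equation*}
\|u\|_{\tH{s}} = \|T_a F_u\|_{\tH{1+\sigma}} \preceq \|F_u\|_{\dH{-1+\sigma}_a} \preceq \|A^{s/2} u\|,
\end{equation*}
yielding the claimed continuous embedding. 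The inclusion $D((A^*)^{s/2}) \subset \tH{s}$ then follows by the symmetric argument, exchanging the roles of $A$ and $A^*$ and replacing $T_a$ by $T_l^*$.

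The main obstacle is the justification of the $L^2$-adjoint identity for fractional powers of the non-Hermitian operators $A$ and $A^*$, since both are constructed from a non-symmetric sesquilinear form. This rests on the composition law $A^t A^r = A^{t+r}$ on the relevant domain together with the fact that the $L^2$-adjoint of $A^t$ coincides with $(A^*)^t$; these are standard for sectorial operators (cf.\ \cite{kato1961,haase,lunardi}) but have to be applied with care to chain together the fractional and integer-order calculus. A pleasant feature of this strategy is that it avoids explicitly invoking the Kato square root identification $D(A^{1/2}) = \mbV$; instead, the identification $D((A^*)^{(2-s)/2}) = \dH{1-\sigma}$ for $(2-s)/2 < 1/2$ supplied by Theorem~\ref{cont0_1_2} is precisely what allows the argument to reach the higher regularity range $s \in (1, 1+\alpha]$.
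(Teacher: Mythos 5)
Your proof is correct and follows essentially the same route as the paper's: both factor the functional $v \mapsto A(u,v)$ as $(A^{s/2}u,(A^*)^{(2-s)/2}v)$, bound it in the negative dotted norm via Theorem~\ref{cont0_1_2}, recover $u$ through the variational solution operator and the elliptic regularity Assumption~\ref{ellreg}, and conclude by density of $D(A)$ in $D(A^{s/2})$. The only cosmetic differences are that you organize the density step as extending the identity $u=T_aF_u$ by continuity and you use $T_a$ where the paper's text writes $T_l^*$; both operators are covered by Assumption~\ref{ellreg}, so the substance is unchanged.
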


\begin{remark}[Kato Square Root Problem]  \label{re:KatoT}
The case of  $s=1$, i.e., 
$
D(A^{ 1/2}) \subset \mbV =: \tH 1 
$ with continuous imbedding
is contained in the Kato
Square Root Theorem.  This is a deep theorem which 
has been intensively studied, see  
\cite{Agranovich,AxelKeithMc,mcintosh90,MR1255054} and the references in
\cite{mcintosh90}.  The Kato Square Root Theorem  
holds for our problem under fairly weak regularity
assumptions on the coefficients defining our bilinear form
\cite{Agranovich}.
In fact, it requires the existence of $\epsilon>0$ such that $A_a$ and
$A_l^*$ are  bounded operators from $\tH{1+\gamma}$ to
$\dH{-1+\gamma}_a$ and $\dH{-1+\gamma}_l$, respectively,
for $| \gamma | \leq \epsilon$. 
\end{remark}

\begin{proof}[Proof of Theorem~\ref{t:characterization_hdot_onewayc}]
We consider the case of $A$ as the case of $A^*$ is similar.  
Suppose
that  $u$ is in $D(A)$ and $v$ is in $D(A^*)$.  
Then, for $t\in [0,\alpha]$,
$$ \bal A(u,v)&= (Au,v)
=(A^{(1-t)/2} A^{(1+t)/2} u,v)\\ &
= (A^{(1+t)/2}
u,\Ats^{(1-t)/2}  v):=F(v).
\eal
$$
Thus, Theorem~\ref{cont0_1_2} gives
$$\bal
|F(v)| 
&\le \|A^{(1+t)/2}u\|
\| \Ats^{(1-t)/2} v\| \\
& \preceq
\|A^{(1+t)/2}u\| \|v\|_{\dH{1-t}}.
\eal$$
This implies that $F\in \dot H_l^{-1+t}$.  
The elliptic regularity Assumption~\ref{ellreg} implies
that $u = T_l^* F$ is in $\tH{1+t}$ and satisfies
$$\|u\|_{H^{1+t}} \preceq \|A^{(1+t)/2}u\|.$$
As $D(A)$ is dense in $D(A^{(1+t)/2})$,
$D(A^{(1+t)/2})\subset \tH{1+t}$ 
follows.   
\end{proof}

\section{Finite element approximation}\label{s:fem}

In this section, we define finite element approximations 
to the  operator
$A^{-\beta}$
for $\beta\in (0,1)$.   
For simplicity, we assume that the domain $\Omega$ is polyhedral so that
it can be partitioned into a conforming subdivision made of simplices.
Further, we assume that we are given a finite dimensional subspace $\mathbb V_h\subset \mathbb V$
consisting of continuous complex valued functions, vanishing on
$\Gamma_D$,
which
are piecewise linear  with respect to a conforming subdivision  of
simplicies of maximal 
size diameter $h\le 1$.  
Notice that when the form $A(v,w)$ is real for real $v,w$, so is the finite element space (see Remark~\ref{r:real}).
We also need to assume that the triangulation
matches the partitioning $\Gamma=\Gamma_D\cup \Gamma_N$.  This means
that any mesh simplex of dimension less than $d$ which lies on $\Gamma$ 
is contained in either $\bar \Gamma_N$ or $\Gamma_D$.
Given a universal constant $\rho>0$, we restrict further our considerations to quasi-uniform partitions $\mathcal T$, i.e.  satisfying
\begin{equation}\label{e:quasiuniform}
\frac{\max_{T\in \mathcal T} \textrm{diam}(T)}{\min_{T\in \mathcal T} \textrm{diam}(T)} \leq \rho.
\end{equation}

Let $\pi_h$ denote the
$L^2(\Omega)$-orthogonal projector onto $\mathbb V_h$.
Given a sequence of conforming subdivisions $\{ \mathcal T\}_{h}$ satisfying \eqref{e:quasiuniform}, there holds
\beq
\|\pi_h v\|_1\le C \|v\|_1, \Forall v\in\mbV,
\label{pihb}
\eeq
where the constant $C$ is independent of $h$; see  \cite{bramblexu}. 
Obviously, $\pi_h$ is a bounded operator on $L^2(\Omega)$ and, by interpolation,
is a bounded operator on $\dH s$ for $s\in [0,1]$ with bounds
independent of $h$. 

We shall need the following lemma providing approximation properties for
$\pi_h$. \modif{ These results are a consequence of the Scott-Zhang approximation operator
\cite{ScottZhang} and operator interpolation.   As the specific form of
these results are needed for the analysis in the remainder of this paper,
we include a proof for completeness.}

\begin{lemma}  Let $s$ be in $[0,1]$ and $\sigma>0$ be such that
  $s+\sigma\le 2$.  Then there is a constant $C=C(s,\sigma)$ not
  depending on $h$ and satisfying
$$\|(I-\pi_h) u \|_{\tH s } \le C h^{\sigma} \|u\|_{\tH{s+\sigma}},
  \Forall u\in \tH{s+\sigma}.$$
\end{lemma}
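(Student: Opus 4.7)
The plan is to reduce the estimate to a handful of integer-index endpoint cases and finish by operator interpolation on the Hilbert scale. Let $I_h:\mbV\to\mbV_h$ be a Scott--Zhang-type quasi-interpolant which preserves the vanishing trace on $\Gamma_D$ and satisfies the usual Bramble--Hilbert bounds
$$\|u-I_h u\|_{H^k(\Omega)}\preceq h^{m-k}\|u\|_{H^m(\Omega)},\qquad 0\le k\le m\le 2,\ k\in\{0,1\}.$$
Because $\pi_h$ is the $L^2$-orthogonal projector, the Cea-type inequality $\|(I-\pi_h)u\|\le\|u-I_h u\|$ holds on all of $L^2(\Omega)$.

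First I would establish the row $s=0$ at $\sigma\in\{0,1,2\}$. The trivial $L^2$-stability $\|\pi_h\|\le 1$ handles $\sigma=0$; the Cea inequality combined with Scott--Zhang produces $\|(I-\pi_h)u\|\preceq h^\sigma\|u\|_{\tH\sigma}$ for $\sigma\in\{1,2\}$, using $\tH 1=\mbV$ (its norm equivalent to $\|\cdot\|_{H^1}$ on $\mbV$) and $\tH 2=H^2(\Omega)\cap\mbV$. Next I would handle the row $s=1$. The case $\sigma=0$ is exactly the $H^1$-stability \eqref{pihb}. For $\sigma=1$ I would split
$$(I-\pi_h)u=(u-I_h u)+(I_h u-\pi_h u),$$
bound the first summand by $Ch\|u\|_{H^2}$ via Scott--Zhang, and dominate the second via the inverse inequality $\|v_h\|_\mbV\preceq h^{-1}\|v_h\|$ on $\mbV_h$ (valid by the quasi-uniformity \eqref{e:quasiuniform}) applied to $v_h=I_h u-\pi_h u$, whose $L^2$ norm is controlled by $\|u-I_h u\|+\|u-\pi_h u\|\preceq h^2\|u\|_{H^2}$ from the $s=0$, $\sigma=2$ estimate just proved.

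With these five anchor bounds at $(s,\sigma)\in\{(0,0),(0,1),(0,2),(1,0),(1,1)\}$ in hand, I would obtain the general case by operator interpolation on the Hilbert scale. Fixing $s\in\{0,1\}$ and interpolating the source between consecutive integer indices yields every admissible $\sigma$: one uses $[\dH 0,\dH 1]_\tau=\dH\tau$ (the Hilbert scale of $S$ set up in Section~\ref{s:symmetric}) when $s+\sigma\le 1$, and $[\mbV,H^2(\Omega)\cap\mbV]_\tau=\tH{1+\tau}$ when $s+\sigma\in[1,2]$. A further interpolation of the target between $\dH 0=L^2$ and $\dH 1=\mbV$ at fixed source then covers the remaining $s\in(0,1)$, which completes the proof. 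The step I expect to be the main obstacle is justifying the identification $[\mbV,H^2(\Omega)\cap\mbV]_\tau=H^{1+\tau}(\Omega)\cap\mbV$ for $\tau\in(0,1)$, since the vanishing-trace constraint on $\Gamma_D$ must survive the interpolation at a fractional index; this is classical but relies on the Lipschitz regularity of $\Gamma$ and the structural assumption on $\Gamma_D$ already imposed in Section~\ref{s:notation}.
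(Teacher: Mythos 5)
There is a genuine gap in your interpolation scheme in the region $0<s<1$ with $s+\sigma<1$. Your plan is to fix the source space $\tH{s+\sigma}$ and interpolate the target between $\dH 0=L^2(\Omega)$ and $\dH 1=\mbV$; but this requires the two endpoint bounds $\|(I-\pi_h)u\|\preceq h^{s+\sigma}\|u\|_{\tH{s+\sigma}}$ and $\|(I-\pi_h)u\|_{\mbV}\preceq h^{s+\sigma-1}\|u\|_{\tH{s+\sigma}}$, and the second one is not available when $s+\sigma<1$: for $u\in\dH{s+\sigma}$ with $s+\sigma<1$ the function $u$ need not lie in $H^1(\Omega)$, so $(I-\pi_h)u\notin\mbV$ in general and no inverse estimate can rescue it (inverse inequalities apply only to $\pi_h u\in\mbV_h$, not to $u$). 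Your five anchor estimates are fine (and your $(s,\sigma)=(1,1)$ argument via Scott--Zhang plus the inverse inequality is essentially the paper's ``obvious manipulations''), but the route from the anchors to the low-regularity fractional cases is broken. The paper closes exactly this region differently: it first interpolates the \emph{target} at the fixed source $\mbV$ to get $\|(I-\pi_h)u\|_{\tH s}\preceq h^{1-s}\|u\|_{\mbV}$, separately obtains the stability $\|(I-\pi_h)u\|_{\tH s}\preceq\|u\|_{\tH s}$ (interpolating the $L^2$- and $H^1$-stability of $\pi_h$), and then interpolates the \emph{source} between $\tH s$ and $\tH 1$, invoking the reiteration theorem to identify the intermediate space with $\tH{s+\sigma}$. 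You would need this extra two-step argument (or an equivalent) to cover $s\in(0,1)$, $s+\sigma<1$.

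A secondary point: for $s+\sigma>1$ your construction leans on the identification $[\mbV,\,H^2(\Omega)\cap\mbV]_\tau=\tH{1+\tau}$ with norms uniform in $\tau$, which you flag as the main obstacle. The paper avoids this issue altogether: it interpolates the Scott--Zhang error bound between the unconstrained spaces $H^1(\Omega)$ and $H^2(\Omega)$ (where $[H^1(\Omega),H^2(\Omega)]_\tau=H^{1+\tau}(\Omega)$ is standard on a Lipschitz domain), using membership in $\mbV$ only to ensure $\widetilde\pi_h u\in\mbV_h$, and since the $\tH{1+\tau}$-norm is by definition the $H^{1+\tau}(\Omega)$-norm no interpolation of constrained spaces is ever needed. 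If you keep your route, this identification for mixed boundary conditions on a Lipschitz domain requires a genuine argument rather than a citation to the classical unconstrained case.
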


\begin{proof}  
Let $\widetilde \pi_h$ denote the Scott-Zhang approximation operator
\cite{ScottZhang} mapping onto the set of piecewise linear polynomials
with respect to 
 the above triangulation (without any imposed boundary conditions).
This operator satisfies, for $\ell=0,1$ and $k=1,2$,
\beq
\|(I-\widetilde \pi_h) u \|_{H^\ell}\preceq h^{k-\ell} \| u \|
_{H^k}, \Forall u\in H^k(\Omega).
\label{scottz}
\eeq
In addition, $\widetilde \pi_hu \in \mbV_h$ for $u\in\mbV$.

We first verify the lemma when $s+\sigma\le 1$.  We clearly have
\beq
\| (I-\pi_h) u \| \le \| (I-\widetilde \pi_h) u \|\preceq h \| u \|
_{\mbV}, \Forall u\in \mbV
\label{sz0}.
\eeq
It immediately follows from \eqref{pihb}
that
\beq
\| (I-\pi_h) u \|_{\mbV} \preceq \| u \|
_{\mbV}, \Forall u\in \mbV.
\label{sz1}
\eeq
Interpolating this and \eqref{sz0} gives
\beq
\| (I-\pi_h) u \|_{\tH s} \preceq h^{1-s}\| u \|
_{\mbV}, \Forall u\in \mbV,
\label{sz2}
\eeq
for $s\in [0,1]$.  As $\pi_h$ is stable on $L^2(\Omega)$ and $\mbV$,
interpolation implies that it is stable on $\tH s$ and hence
\beq
\| (I-\pi_h) u \|_{\tH s} \preceq \| u \|
_{\tH s}, \Forall u\in \tH s,
\label{sz3}
\eeq
for $s\in [0,1]$. Interpolating \eqref{sz2} and \eqref{sz3} and applying
the reiteration theorem gives for $\sigma>0$ and $s+\sigma \le 1$,
\beq
\| (I-\pi_h) u \|_{\tH s} \preceq h^\sigma \| u \|
_{\tH {s+\sigma}}, \Forall u\in \tH {s+\sigma}.
\label{sz4}
\eeq

We next consider the case when $s+\sigma\in (1,2]$.  Taking $\ell=1$ in
\eqref{scottz} and interpolating between the $k=1$ and $k=2$ 
gives for $\zeta\in [1,2]$,
$$
\|(I-\widetilde \pi_h) u \|_{H^1}\preceq h^{\zeta-1} \| u \|
_{H^\zeta}, \Forall u\in H^\zeta(\Omega).
$$
Thus for $u\in \tH {s+\sigma}$, by \eqref{sz4},
$$\bal 
\| (I-\pi_h) u \|_{\tH s} &\preceq h^{1-s} \| (I-\pi_h) u \|
_{\mbV}\preceq h^{1-s} (\| (I-\widetilde \pi_h) u \|_\mbV 
+\| (\widetilde \pi_h -\pi_h) u\|_\mbV)\\
&\preceq  h^\sigma \|u\|_{\tH{s+\sigma}} +h^{-s} \|  (\widetilde \pi_h -\pi_h)
u\|
\preceq h^\sigma \|u\|_{\tH{s+\sigma}}
\eal
$$
where the last inequality followed from \eqref{scottz} and obvious
manipulations.   
\end{proof}

We define $A_h:\mathbb V_h\rightarrow \mathbb V_h$ by 
\beq
(A_h v_h,\varphi_h)=A(v_h,\varphi_h), \Forall \varphi_h\in \mathbb V_h.
\label{ah}
\eeq
The operator $A_h$ is the discrete analogue of $A$ and we analogously  define
$A_h^*$,  the discrete analogue of $A^*$. The fractional
powers $A_h^{-\beta}$ for $\beta>0$ are again given by
\eqref{A-beta} but with $A$ replaced by $A_h$, i.e.,  for $\beta\in(0,1)$,
$A_h^{-\beta}:\mathbb V_h \rightarrow \mathbb V_h$ is given by
\beq 
A^{-\beta}_h:=\frac {\sin(\beta \pi)}\pi \int_0^\infty \mu^{-\beta} (\mu I
+A_h)^{-1} \, d\mu.
\label{feA-beta}
\eeq

The goal of this paper is to analyze the error between $A^{-\beta} f$ 
and $A^{-\beta}_h \pi_hf$.

\begin{remark}[Real Valued Bilinear Forms and Finite Element Spaces]\label{r:real}  When $A(v,w)$ is real for real $v,w$,  the above 
operators restricted to real valued functions are real valued and
hence we may use Sobolev spaces  and approximation 
spaces $\mathbb V_h$ of real valued functions.  
\end{remark}

Similarly, let $S_h:\mbV_h\rightarrow\mbV_h$ be defined by
$$ (S_h v_h,w_h)=S(v_h,w_h),\Forall v_h,w_h\in \mbV_h.$$
Theorem 3.1 of \cite{kato1961} applied to the discrete operators  
$A_h$ and $S_h$ shows that for $s\in [0,1/2)$,
$$
\bigg(1-\tan\frac{\pi s}2\bigg )\|S_h^s v_h\| \le 
\|A_h^s v_h\| \le \bigg[1+\big(\frac s\pi \tan \pi s\big)^{1/2}\big
(\eta+\eta^2)\bigg] \|S_h^s v_h\|, 
$$
for all  $v_h\in \mbV_h$.  
Here  $\eta$ is the index of $A(\cdot,\cdot)$ (see, \eqref{index}).
This also holds for $A^*_h$.

The bound \eqref{pihb} 
implies (see, e.g., \cite{bankdupont}) that there are positive
constants $c$ and $C$, not depending on $h$ such that for $s\in [0,1]$,
\beq
  c\|S_h^{s/2} v_h\|\le \|v_h\|_{\dH s} \le C \|S_h^{s/2} v_h\|,
\Forall v_h\in \mbV_h.
\label{sh1}
\eeq
Combining the  preceding two sets of inequalities proves the following
lemma. 

\begin{lemma}[Discrete Characterization of $\dH {s} $ for $s\in \lbrack 0,1)$] \label{l:characterization_discrete}
There exist positive  constants $c,C$ independent of $h$ 
such that for all $v_h\in \mathbb V_h$ and $s\in [0,1)$,
$$ 
c \|v_h\|_{\dH s}\le \|A_h^{s/2} v_h\|\le C   \|v_h\|_{\dH s}.
$$
This result holds with $A_h$ replaced by $A_h^*$.
\end{lemma}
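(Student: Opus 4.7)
The plan is to combine the two ingredients displayed immediately before the lemma: the Kato-type inequality of Theorem~3.1 of \cite{kato1961} applied to the discrete pair $(A_h,S_h)$, and the $h$-uniform equivalence \eqref{sh1} between $\|S_h^{s/2}v_h\|$ and $\|v_h\|_{\dH{s}}$.

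First I would check that the Kato inequality can be invoked at the exponent $s/2$: since $s\in [0,1)$ forces $s/2\in [0,1/2)$, the constants $1-\tan(\pi s/4)$ and $1+\bigl((s/(2\pi))\tan(\pi s/2)\bigr)^{1/2}(\eta+\eta^{2})$ are positive, finite, and depend only on $s$ and the index $\eta$ of $A(\cdot,\cdot)$. Applying the inequality to the discrete setting yields the $h$-independent two-sided bound
$$
\|A_h^{s/2} v_h\| \approx \|S_h^{s/2} v_h\|,\qquad v_h\in \mathbb V_h,
$$
with $\approx$ denoting equivalence with constants that do not depend on $h$.

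Next I would invoke \eqref{sh1} to convert $\|S_h^{s/2} v_h\|$ into $\|v_h\|_{\dH{s}}$: this is the step at which the discrete structure of $\mathbb V_h$ enters, and its $h$-uniformity rests on the quasi-uniformity \eqref{e:quasiuniform} of the mesh family and the $H^1$-stability \eqref{pihb} of $\pi_h$. Chaining this equivalence with the previous one yields constants $c=c(s,\eta)$ and $C=C(s,\eta)$, independent of $h$, for which the claimed bound holds on $\mathbb V_h$. The statement for $A_h^*$ is obtained by the same two-step chain: Theorem~3.1 of \cite{kato1961} applies equally to regularly accretive adjoints (see also Theorem~\ref{resbound}), the symmetric part of $A_h^*$ is again $S_h$, and $A^*(\cdot,\cdot)$ has the same index $\eta$ as $A(\cdot,\cdot)$, so the two ingredients can be reassembled verbatim.

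I do not expect any real obstacle: the lemma is essentially a concatenation of two equivalences already in place. The one point that requires genuine care is tracking $h$-independence, which is already guaranteed by the nature of the two cited estimates — the Kato inequality being a purely operator-theoretic statement about $A_h$ and $S_h$ at a fixed discretization, and \eqref{sh1} being anchored in the quasi-uniform setting by \eqref{pihb}.
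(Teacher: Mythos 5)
Your proposal is correct and follows essentially the same route as the paper: the paper proves the lemma exactly by applying Theorem~3.1 of Kato to the discrete pair $A_h$, $S_h$ (at exponent $s/2\in[0,1/2)$, with constants depending only on $s$ and the index $\eta$) and then combining with the $h$-uniform equivalence \eqref{sh1}, the adjoint case following since $A_h^*$ has the same symmetric part $S_h$ and the same index.
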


\begin{remark}[Quasi-uniformity Assumption] 
The above lemma still holds  as long as \eqref{pihb} holds. 
This is for instance the case for certain mesh refinement strategies \cite{bankyserentant,brPstein}.
\end{remark}

\section{Error Estimates}\label{s:error}

In this section, we study numerical approximation to the operators
$A^{-\beta}$ 
for $\beta\in (0,1)$. Specifically,
this involves bounding the errors $(A^{-\beta}-A_h^{-\beta}\pi_h)v$ 
for  $v$ having appropriate smoothness.

We shall use our finite element spaces to approximate $A_a^{-1}:=T_a$.
Specifically, 
$T_{h,a}:\mbV_a^* \rightarrow \mathbb V_h$ is defined for $F\in
\mbV_a^*$ by
$$
A(T_{h,a} F,\phi_h)=\langle F,\phi_h \rangle,\Forall \phi_h \in \mbV_h.
$$
We define $T_{h,l}^*$ corresponding to $T_l^*:=(A_l^*)^{-1}$
analogously.

\modif{
The following lemma provides approximation error bounds in terms of the norms
needed for our subsequent analysis.  Although the techniques in the
proof (Galerkin orthogonality and Nitsche finite element duality)  are
completely classical,  the results are not quotable (as far as we
know). 
We include a proof for completeness.}

\begin{lemma}[Finite Element Error]\label{FEerror} Assume that
  \eqref{coer}, \eqref{bdd} and the elliptic regularity  Assumption~\ref{ellreg} hold.
Let $s\in [0,\frac  12]$ and set $\alpha_*:=\frac 1 2 (\alpha+\min(1-2s,\alpha))$.
There is a positive constant $c$ not depending on $h$ 
satisfying
\begin{equation}\label{e:dual_estim}
\|T_a-T_{a,h}\|_{\dH {\alpha-1}_a \rightarrow \dH {2s}}
\le c h^{2\alpha_*}.
\end{equation}
The above immediately implies
$$\|T_a-T_{a,h}\|
\le c h^{2\alpha}.$$
\end{lemma}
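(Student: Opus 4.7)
The plan is a two-step finite element analysis: first an energy estimate for the error $e := (T_a - T_{a,h})F$ in the $\mbV$ norm, then an Aubin--Nitsche duality argument to gain extra powers of $h$ in the weaker norm $\|\cdot\|_{\dH{2s}}$. Galerkin orthogonality $A(e, \phi_h) = 0$ for all $\phi_h \in \mbV_h$ is immediate from the definitions of $T_a$ and $T_{a,h}$ and will drive both steps.

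For the primal estimate I would combine \eqref{coer}, Galerkin orthogonality, and \eqref{bdd} in the standard C\'ea fashion to obtain $\|e\|_\mbV \lesssim \inf_{\phi_h\in\mbV_h}\|T_aF-\phi_h\|_\mbV$. Using the Scott--Zhang quasi-interpolant and the preceding approximation lemma (with $s=1$, $\sigma=\alpha$) gives $\|T_aF-\phi_h\|_\mbV \lesssim h^\alpha \|T_aF\|_{\tH{1+\alpha}}$, while the elliptic regularity Assumption~\ref{ellreg} (taken at its endpoint) bounds $\|T_aF\|_{\tH{1+\alpha}} \lesssim \|F\|_{\dH{\alpha-1}_a}$. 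Together these produce $\|e\|_\mbV \lesssim h^\alpha \|F\|_{\dH{\alpha-1}_a}$, which already covers the case $s=1/2$ of \eqref{e:dual_estim} since $2\alpha_* = \alpha$ there.

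For $s \in [0,1/2)$ I would invoke duality: by \eqref{dualother}, $\|e\|_{\dH{2s}} = \sup_{\chi \in \dH{-2s}_l}|\langle e,\chi\rangle|/\|\chi\|_{\dH{-2s}_l}$. For each such $\chi$, set $\phi_\chi := T_l^*\chi \in \mbV$, so that $A(\psi,\phi_\chi)=\langle \psi,\chi\rangle$ for all $\psi \in \mbV$. The key choice is $t := \min(1-2s,\alpha) > 0$; the embedding $\dH{-2s}_l \hookrightarrow \dH{-1+t}_l$ (valid since $t \le 1-2s$) together with Assumption~\ref{ellreg} applied to $T_l^*$ gives $\phi_\chi \in \tH{1+t}$ with $\|\phi_\chi\|_{\tH{1+t}} \lesssim \|\chi\|_{\dH{-2s}_l}$. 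Testing at $\psi = e$, subtracting the quasi-interpolant of $\phi_\chi$ via Galerkin orthogonality, and applying \eqref{bdd} together with the approximation lemma (indices summing to $1+t$) yields $|\langle e,\chi\rangle| \lesssim \|e\|_\mbV \cdot h^t \|\chi\|_{\dH{-2s}_l}$. Combining with the primal estimate and taking the supremum over $\chi$ delivers \eqref{e:dual_estim} since $\alpha + t = 2\alpha_*$. The displayed $\|T_a - T_{a,h}\| \lesssim h^{2\alpha}$ then follows from the case $s=0$ together with the continuous embedding $L^2(\Omega) \hookrightarrow \dH{\alpha-1}_a$.

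The main bookkeeping subtlety is the consistent use of the two dual pairings (antilinear on the primal side, linear on the adjoint side), which is why Assumption~\ref{ellreg} is stated symmetrically for both $T_a$ and $T_l^*$; this symmetry is precisely what makes the adjoint auxiliary problem $\phi_\chi = T_l^*\chi$ accessible at the regularity needed for the duality step.
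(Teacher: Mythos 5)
Your proposal is correct and follows essentially the same route as the paper: a C\'ea/energy estimate $\|e\|_{\mbV}\preceq h^{\alpha}\|F\|_{\dH{\alpha-1}_a}$ via elliptic regularity at the endpoint, combined with an Aubin--Nitsche duality argument using the adjoint solution operator $T_l^*$ at regularity $1+\min(1-2s,\alpha)$. The only cosmetic difference is that you unify the duality step through $t:=\min(1-2s,\alpha)$ and the embedding $\dH{-2s}_l\hookrightarrow\dH{-1+t}_l$, whereas the paper spells out the two cases $2s\le 1-\alpha$ and $2s>1-\alpha$ separately; the estimates obtained are identical.
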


\begin{proof} The proof of this lemma is classical and we only include
  details for completeness.  
We distinguish two cases.\\
\step{1} When  $2s \leq 1-\alpha$  then we can fully take advantage of the elliptic regularity assumption.
  For $F\in \dH {\alpha-1}_a$, 
we set $e=(T_a-T_{a,h})F$.  By \eqref{dualother} and the 
elliptic regularity
Assumption~\ref{ellreg},
$$\bal
\|e\|_{\dH {2s}} \leq  \|e\|_{\dH {1-\alpha}}&\preceq \sup_{G\in \dH {\alpha-1}_l}
\frac{\langle e,G\rangle} {\|G\|_{\dH {\alpha-1}_l}}\preceq
\sup_{G\in \dH {\alpha-1}_l}
\frac {A(e,T^*_l G)} {\|T^*_l G\|_{H^{1+\alpha}}}\\
&=\sup_{w\in \tH{1+\alpha}}
\frac{A(e,w)} {\|w\|_{H^{1+\alpha}}}=\sup_{w\in \tH{1+\alpha}}
\frac{A(e,w-w_h)} {\|w\|_{H^{1+\alpha}}}.
\eal
$$
We used Galerkin orthogonality for the last equality (which holds for
any $w_h\in \mbV_h$).
Using the approximation property 
$$\inf_{w_h\in \mbV_h} \|w-w_h\|_1 \preceq h^{\alpha}
\|w\|_{H^{1+\alpha}}, \Forall w\in \tH{1+\alpha},
$$
\eqref{bdd} and the above inequalities give
$$\|e\|_{\dH {1-\alpha}}\preceq h^{\alpha} \|e\|_1.$$

\step{2}This duality argument yields a reduced order of convergence when $2s>1-\alpha$.
Indeed, proceeding similarly
$$
 \bal
\|e\|_{\dH {2s}} &\preceq \sup_{G\in \dH {-2s}_l}
\frac{\langle e,G\rangle} {\|G\|_{\dH {-2s}_l}}\preceq
\sup_{G\in \dH {-2s}_l}
\frac {A(e,T^*_l G)} {\|T^*_l G\|_{H^{2-2s}}}\\
&=\sup_{w\in \tH{2-2s}}
\frac{A(e,w)} {\|w\|_{H^{2-2s}}}=\sup_{w\in \tH{2-2s}}
\frac{A(e,w-w_h)} {\|w\|_{H^{2-2s}}},
\eal
$$
so that together with the approximation property
$$\inf_{w_h\in \mbV_h} \|w-w_h\|_1 \preceq h^{1-2s}
\|w\|_{H^{2-2s}}, \Forall w\in \tH{2-2s},
$$
\modif{we deduce that}
$$
\|e\|_{\dH {2s}}\preceq h^{1-2s} \|e\|_1.
$$

Gathering the two cases $2s > 1+\alpha$ and $2s \leq 1+\alpha$, we get
$$
\|e\|_{\dH {2s}}\preceq h^{\min(1-2s,\alpha)} \|e\|_1.
$$
Whence, together with the estimate 
$$\|e\|_1\preceq h^\alpha\|T_aF\|_{H^{1+\alpha}}\preceq 
h^\alpha\|F\|_{\dH {\alpha-1}_a} $$
guaranteed by Cea's Lemma and elliptic regularity Assumption~\ref{ellreg},
we obtain \eqref{e:dual_estim} as desired.
\end{proof}

We can now state and prove our main convergence results. 
It requires data in the abstract space $D(A^\delta)$ for some $\delta  \geq 0$.
A characterization of $D(A^\delta)$ is provided in Theorem~\ref{dissqrrt} below. 

\begin{theorem}[Convergence] \label{FEinterror} 
  Suppose that  \eqref{coer} and
  \eqref{bdd} as well as
  the elliptic regularity Assumption~\ref{ellreg} hold. 
  Given $s\in [0,\frac 12)$, set $\alpha_*:=\frac 1 2 (\alpha+\min(1-2s,\alpha))$ and 
  $\gamma:=\max(s+\alpha^*-\beta,0)$ and let $\delta \geq \gamma$.
Assume finally, that $s+\alpha_*\neq 1/2$ when $\delta=\gamma$ and
$s+\alpha_*\ge \beta$.
Then there exists a constant $C$ independent of $h$ and $\delta$ such that
   $$
\| (A^{-\beta  } - A_h^{-\beta}\pi_h) f\|_{\dH{2s}} 
\leq C_{\delta,h}  h^{2\alpha_*}\|A^\delta f\|, \Forall f\in
D(A^{\delta}),
$$
where
\beq
C_{\delta,h}=\left \{ \bal C\ln(2/h)&:\qquad \hbox{ when } \delta
=\gamma \hbox{ and } s+\alpha_*\ge
 \beta, \ s+\alpha_* \not =\frac 1 2\\
C&:\qquad \hbox{ when } \delta>\gamma
\hbox{ and } s+\alpha_*\ge
\beta, \\
C&:\qquad \hbox{ when } 
\delta=0 \hbox{ and }\beta>s+\alpha_*.
\eal
\right.
\label{cdelta}
\eeq
\end{theorem}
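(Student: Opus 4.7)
The starting point is the Balakrishnan representation: writing the error as
\[
(A^{-\beta}-A_h^{-\beta}\pi_h)f = \frac{\sin(\beta\pi)}{\pi}\int_0^\infty \mu^{-\beta}\bigl[(\mu I+A)^{-1}-(\mu I+A_h)^{-1}\pi_h\bigr]f\,d\mu,
\]
so everything reduces to a pointwise-in-$\mu$ estimate on the resolvent difference in the $\dH{2s}$ norm, followed by integration against $\mu^{-\beta}$. The plan is to view $w_h(\mu):=(\mu I+A_h)^{-1}\pi_h f$ as the Galerkin approximation in $\mbV_h$ of $w(\mu):=(\mu I+A)^{-1}f$ for the shifted form $A(\cdot,\cdot)+\mu(\cdot,\cdot)$, and then redo the finite element analysis of Lemma~\ref{FEerror} with $A$ replaced by $A+\mu I$, keeping careful track of how constants depend on $\mu$.

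To exploit the regularity of $f$, I would use the resolvent commutation \eqref{anegcom}--\eqref{aposcom}: for $f\in D(A^\delta)$, write $(\mu I+A)^{-1}f = A^{-\delta}(\mu I+A)^{-1}A^\delta f$ and similarly on the discrete side. The quantity $g:=A^\delta f$ is then just an $L^2$ element, and the resolvent bounds from Theorem~\ref{resbound} together with the Hermitian-equivalence characterizations of $D(A^{s/2})$ and $D(A_h^{s/2})$ (Theorems~\ref{cont0_1_2}, \ref{t:characterization_hdot_onewayc} and Lemma~\ref{l:characterization_discrete}) yield two-parameter estimates of the form $\|(\mu I+A)^{-1}\|_{\dH{2\sigma_1}\to\dH{2\sigma_2}}\preceq \mu^{-(1-(\sigma_2-\sigma_1))}$ for admissible $\sigma_i$. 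Combining these with the elliptic-regularity-based Galerkin/Nitsche estimate (essentially the proof of Lemma~\ref{FEerror} applied to the shifted form) should give an estimate of the shape
\[
\bigl\|[(\mu I+A)^{-1}-(\mu I+A_h)^{-1}\pi_h]f\bigr\|_{\dH{2s}} \preceq h^{2\alpha_*}\,\mu^{s+\alpha_*-\beta-\delta-1}\,\|A^\delta f\|
\]
on one part of the $\mu$-axis, with a complementary, regularity-free estimate $\preceq h^{2\alpha_*}\mu^{-1}$ obtained by the triangle inequality and the raw resolvent bound $\|(\mu I+A)^{-1}\|\preceq\mu^{-1}$ on the other part.

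I would then split $\int_0^\infty$ at some threshold (for instance $\mu=1$) and estimate each piece. The definition $\gamma=\max(s+\alpha_*-\beta,0)$ is exactly what makes the $\mu$-integral of $\mu^{-\beta}\cdot\mu^{s+\alpha_*-\beta-\delta-1}$ integrable when $\delta\geq\gamma$: the three cases in \eqref{cdelta} correspond to (i) $\delta=\gamma$ with $s+\alpha_*\geq\beta$ and $s+\alpha_*\neq 1/2$, where the small-$\mu$ integrand is exactly $\mu^{-1}$ and produces the $\log(2/h)$ after cutting off the integral at $\mu\sim h^{-2}$ (or equivalently at the spectral bottom of $A_h$); (ii) $\delta>\gamma$, where the exponent is strictly better than $-1$ and a geometric sum gives a clean constant; and (iii) $\delta=0$ with $\beta>s+\alpha_*$, where no regularity of $f$ is needed because the Balakrishnan kernel $\mu^{-\beta}$ itself supplies convergence near $\mu=0$.

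The main obstacle is the second paragraph: producing the correct $\mu$-dependence in the Galerkin error. This requires a careful reduction of the shifted resolvent problem to an application of Lemma~\ref{FEerror} together with the $D(A^{s/2})$-characterizations, and it is where the restriction $s+\alpha_*\neq 1/2$ in case (i) enters, since the $\tH{s}$-scale has a mild kink at $s=1/2$ coming from the trace condition built into $\mbV$. Once the pointwise estimate is secured with the right powers of $\mu$, the integration itself is routine and yields the stated logarithmic factor only at the endpoint $\delta=\gamma$, $s+\alpha_*\geq\beta$, matching \eqref{cdelta}.
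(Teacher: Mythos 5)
Your high-level frame --- write the error as the Balakrishnan integral of resolvent differences, commute powers of $A$ so that only $A^\delta f\in L^2(\Omega)$ enters, and split the $\mu$-integral --- is consistent with the actual argument, and your observation that $(\mu I+A_h)^{-1}\pi_h f$ is the Galerkin approximation of $(\mu I+A)^{-1}f$ for the shifted form is correct. But the entire content of the theorem is the pointwise-in-$\mu$ estimate that you only assert, and your plan for it (redo Lemma~\ref{FEerror} for $A+\mu I$ and ``keep track of $\mu$'') is not carried out; it would require elliptic regularity for the shifted operator with constants explicit in $\mu$, which Assumption~\ref{ellreg} does not provide. The paper's proof avoids exactly this difficulty: it first splits off $(I-\pi_h)A^{-\beta}f$ (this is where the characterizations of $D(A^{s/2})$ enter, and where the hypothesis $s+\alpha_*\neq 1/2$ is used --- the exponent $1/2$ is the unproved Kato square root identification of $D(A^{1/2})$, cf.\ Remark~\ref{re:KatoT}, not a ``trace kink'' in the $\tH{s}$ scale), and then uses the identity $\pi_h(\mu+A)^{-1}-(\mu+A_h)^{-1}\pi_h=(\mu+A_h)^{-1}A_h\pi_h(T_a-T_{h,a})A(\mu+A)^{-1}$, so that the $h$-gain comes from the \emph{unshifted} estimate $\|T_a-T_{h,a}\|_{\dH{-1+\alpha_*}_a\to\dH{1-\alpha_*}}\preceq h^{2\alpha_*}$ of Lemma~\ref{FEerror}, while all $\mu$-dependence is carried by $A_h(\mu+A_h)^{-1}$ and $A(\mu+A)^{-1}$ through Lemma~\ref{l:both_bounds} and the discrete/continuous characterizations. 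Moreover, the estimate you guess is wrong as written: a resolvent-difference bound cannot contain $\beta$; the correct intermediate bound is $\preceq h^{2\alpha_*}\mu^{s+\alpha_*-\delta-1}\|A^\delta f\|$, and it is only usable on an intermediate range of $\mu$.

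The integration bookkeeping is also off in ways that matter for \eqref{cdelta}. When $\delta=\gamma=s+\alpha_*-\beta$, the full integrand $\mu^{-\beta}\cdot\mu^{s+\alpha_*-\delta-1}=\mu^{-1}$ for \emph{all} $\mu$, so it is borderline at both ends: near $\mu=0$ one must switch to a regularity-free bound of size $h^{2\alpha_*}$ uniform in $\mu$ (via $\|T_a-T_{h,a}\|_{L^2\to\dH{2s}}$ and $\|A(\mu+A)^{-1}\|\le 2$), which is integrable against $\mu^{-\beta}$ since $\beta<1$; the logarithm arises from the \emph{large}-$\mu$ range, truncated at $\mu\approx h^{-2\alpha_*/\beta}$, beyond which one uses the triangle inequality and Lemma~\ref{l:both_bounds} to get decay $\mu^{-\beta+\max(s-\delta,0)-1}$ with \emph{no} $h$-factor. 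Your proposed cutoff $\mu\sim h^{-2}$ (described, incorrectly, as the spectral bottom of $A_h$, which is $O(1)$) does not suffice in general: with it the tail contributes only $O(h^{2\beta})$ when $\beta<\alpha_*$, which is weaker than $h^{2\alpha_*}$ precisely in part of the logarithmic case. Likewise, your ``complementary, regularity-free estimate $\preceq h^{2\alpha_*}\mu^{-1}$ obtained by the triangle inequality and the raw resolvent bound'' is not obtainable by those means: the triangle inequality gives $\mu$-decay with no $h$-gain, while the $h$-gain near $\mu=0$ comes with no $\mu$-decay. In short, the skeleton is right, but the key pointwise estimate (the identity above combined with Lemmas~\ref{FEerror} and~\ref{l:both_bounds}), the separate treatment of $(I-\pi_h)A^{-\beta}f$, and the three-region splitting with a $\beta$-dependent cutoff are missing, and these are exactly what produce \eqref{cdelta}.
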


\begin{remark}[Critical Case $s+\alpha_*=\frac 1 2$] \label{r:conv_s12} The condition $s+\alpha_* \not = \frac 1 2 $ in the above theorem can be removed provided that the Kato Square Root Theorem holds 
as well (see, Remark~\ref{re:KatoT}).
\end{remark}
\begin{remark}[Critical Case $2s=1$]\label{r:critical_s1}  The above results also hold when
  $2s=1$ provided that the continuous and discrete Kato Square Root
  Theorem hold. As already mentioned above, the former relies on the
  additional assumption requiring the existence of $\epsilon>0$ such
  that $A_a$ and $A_l^*$ are bounded from $\tH{1+\gamma}$
  to $\dH{\gamma-1}_a$ and $\dH{\gamma-1}_l$, respectively,
  for $|\gamma| \leq \epsilon$.
  For the discrete Kato Theorem, we will need to assume similar
  conditions for operators based on the $S$ form, see
  Theorem~\ref{dissqrrt} below.
  \end{remark}

The above theorem depends on an auxiliary lemma.

\begin{lemma}\label{l:both_bounds} 
For $s \in [0,1]$, there is a constant $C$ not depending on $h$ 
such that for any $\mu \in (0,\infty)$
$$
  \| A^{s}(\mu+A)^{-1}v\|\le C \mu^{s-1} \|v\|,\Forall
v\in L^2(\Omega)
$$
and
$$
 \|A_h^{s} (\mu+A_h)^{-1}v_h\|\le c \mu^{s-1}\|v_h\|,\Forall
v_h\in \mathbb V_h.
$$
\end{lemma}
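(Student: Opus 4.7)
The plan is to reduce to the endpoint cases $s=0$ and $s=1$ using the algebraic identity $A(\mu I+A)^{-1} = I - \mu(\mu I+A)^{-1}$, and then interpolate for $s\in(0,1)$ via the moment inequality implied by the characterization $D(A^s)=[L^2(\Omega),D(A)]_s$ (complex method) mentioned in Section~\ref{s:notation}.

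\textbf{Endpoint $s=0$.} For $\mu>0$, apply Theorem~\ref{resbound} with $z=-\mu$: since $\arg(-\mu)=\pi>\frac{\pi}{2}+\omega$, we obtain $\|(\mu I+A)^{-1}\|=\|R_{-\mu}(A)\|\le \mu^{-1}$.

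\textbf{Endpoint $s=1$.} Write $A(\mu I+A)^{-1}=I-\mu(\mu I+A)^{-1}$, so the $s=0$ bound yields $\|A(\mu I+A)^{-1}\|\le 1+\mu\cdot \mu^{-1}=2$.

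\textbf{Intermediate $s\in(0,1)$.} Set $w:=(\mu I+A)^{-1}v\in D(A)$. The two endpoint bounds give
$$
\|w\|\le \mu^{-1}\|v\|, \qquad \|Aw\|=\|v-\mu w\|\le 2\|v\|.
$$
Since $D(A^s)$ is the complex interpolation space between $L^2(\Omega)$ and $D(A)$ (with the graph norm $\|\cdot\|+\|A\cdot\|$), the moment inequality
$$
\|A^s w\|\preceq \|w\|^{1-s}\|Aw\|^s
$$
holds for every $w\in D(A)$ with a constant depending only on $s$ and the sectorial angle $\omega$ of $A$. Substituting the two bounds gives
$$
\|A^s(\mu I+A)^{-1}v\|\preceq \bigl(\mu^{-1}\|v\|\bigr)^{1-s}\bigl(2\|v\|\bigr)^s \preceq \mu^{s-1}\|v\|,
$$
which is the desired estimate for the continuous operator.

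\textbf{Discrete version.} The sesquilinear form $A(\cdot,\cdot)$ restricted to $\mbV_h$ satisfies \eqref{coer}, \eqref{bdd} and \eqref{index} with the same constants (in particular the same index $\eta$), so Theorem~\ref{resbound} applies to $A_h$ with the same sector $S_\omega$. The endpoint arguments above therefore give $\|(\mu I+A_h)^{-1}\|\le \mu^{-1}$ and $\|A_h(\mu I+A_h)^{-1}\|\le 2$, both uniformly in $h$. The moment inequality applied to $A_h$ (a bounded sectorial operator on $\mbV_h$) then yields $\|A_h^s(\mu I+A_h)^{-1}v_h\|\preceq \mu^{s-1}\|v_h\|$ for $v_h\in\mbV_h$.

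\textbf{Main obstacle.} The one point requiring care is that the constant in the discrete moment inequality must be independent of $h$. This is not a new difficulty: the interpolation/moment inequality for m-accretive (more generally sectorial) operators is an abstract fact whose constant depends only on the angle $\omega$ of the numerical range and not on the ambient space, the dimension, or the operator norm. Since the sector for $A_h$ coincides with that of $A$, the constant obtained is the same as in the continuous setting, which gives the required $h$-independent bound.
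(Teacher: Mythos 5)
Your proof is correct and follows essentially the same route as the paper: bound $w=(\mu+A)^{-1}v$ and $Aw$ separately ($\mu^{-1}\|v\|$ and $2\|v\|$) and then apply the moment inequality coming from the complex-interpolation characterization of $D(A^s)$ (via bounded imaginary powers), with the $h$-independence of the discrete constant holding for exactly the reason you state. The only cosmetic difference is that the paper derives $\|(\mu+A)^{-1}v\|\le\mu^{-1}\|v\|$ directly from the coercivity \eqref{coer} rather than from the resolvent estimate of Theorem~\ref{resbound}.
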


\begin{proof}
The claim relies on interpolation estimates.
As the same argument is used for both estimates, we only prove the
first.

Theorem~4.29 of \cite{lunardi} implies $A^{it}$ is a bounded
operator satisfying 
\beq\|A^{it}\| \le e^{\pi |t|/2},\Forall t\in \RR.
\label{ait}
\eeq
This, in turn, implies that (e.g., Lemma~4.31 of \cite{lunardi}) for $s\in [0,\frac 1 2]$, 
\beq
[L^2(\Omega),D(A^{1/2})]_{2s} =
D(A^{s}).\label{complexi}
\eeq 
Here $[X,Y]_{2s}$ denotes the intermediate space between $X$ and $Y$
obtained by the complex interpolation method.
Thus, for $w \in D(A)$, Corollary~2.8 of \cite{lunardi} gives
$$
\| A^{s} w \| \preceq \| w \|_{[L^2(\Omega),D(A)]_{s}} \le \| A w\|^{s} \|w\|^{1-s}.
$$
Now if $w=(\mu+A)^{-1} v$ with $v\in L^2(\Omega)$, then 
$$\mu \|w\|^2 +\Re A(w,w)= \Re(v,w)$$
and hence \eqref{coer} immediately implies $\|(\mu+A)^{-1}v\|\le \mu^{-1} \|v\|$.
In addition,
$$\|A w\|= \|v-\mu(\mu+A)^{-1}v\| \le 2\|v\|.
$$
The lemma follows combining the above estimates.
\end{proof}

\begin{proof}[Proof of Theorem \ref{FEinterror}]
Without loss of generality, we may assume that $\delta\le 1+\alpha_*$
since we shall get $2\alpha_*$ order convergence as soon as
$\delta>2\alpha_*-2\beta$ and we always have $2\alpha_*-2\beta \leq 1+\alpha_*$.

\step{1}
We first show that 
\begin{equation}\label{e:first_step}
\|(I-\pi_h) A^{-\beta} f\|_{\dH{2s}}  \preceq h^{2\alpha_*}\|A^\delta f\|.
\end{equation}
Theorem~4.6 of \cite{lunardi} 
implies that $A^{-\beta} f$ is in $ D(A^{\alpha_*})$ when $f$ is in $D(A^{\alpha_*-\beta})$ and we now discuss separately the cases $s+\alpha_* \in (0,\frac 1 2)$, $s+\alpha_*=\frac 1 2$ and $s+\alpha_* \in (\frac 1 2, \frac 1 2 (1+\alpha)]$.
When $s+\alpha_* \in (0,\frac 12)$, we apply Theorem~\ref{cont0_1_2} 
and obtain
\begin{equation}\label{e:first_step_AA}
\begin{split}
\|(I-\pi_h) A^{-\beta} f\|_{\dH{2s}} &\preceq h^{2\alpha_*} \| A^{-\beta}
f\|_{\dH{2s+2\alpha_*}} \preceq h^{2\alpha_*} \| A^{s+\alpha_*-\beta}
f\|\\
&\preceq  h^{2\alpha_*} \| A^{\delta}
f\|,
\end{split}
\end{equation}
recalling that $\delta\geq \gamma \geq s+\alpha_*-\beta$.
For $s+\alpha_* \in (\frac 12, \frac 12(1+\alpha)]$, we apply
Theorem~\ref{t:characterization_hdot_onewayc} to conclude that
$A^{-\beta} f$ is in
$  \tH{2s+2\alpha_*}$. And again,
\begin{equation}\label{e:first_step_A}
\begin{split}
\|(I-\pi_h) A^{-\beta} f\|_{\dH{2s}} &\preceq h^{2\alpha_*} \| A^{-\beta}
f\|_{H^{2s+2\alpha_*}} \preceq  h^{2\alpha_*} \| A^{s+\alpha_*-\beta}
f\|\\
&\preceq  h^{2\alpha_*} \| A^{\delta}
f\|.
\end{split}
\end{equation}

Finally, we consider the case $s+\alpha_*=\frac 1 2$, which entails $\alpha \leq 1-s$ so that $\alpha_*=\alpha$ and $2s+2\alpha=1$.
We choose $0<\epsilon<\alpha$ (further restricted below) so that 
 as above 
$$
\|(I-\pi_h) A^{-\beta} f\|_{\dH{2s}} \preceq h^{2\alpha+\epsilon} \| A^{-\beta}
f\|_{H^{1+\epsilon}}\preceq  h^{2\alpha+\epsilon} \| A^{\frac{1+\epsilon}2 -\beta}
f\|.
$$
In addition the assumption $\delta > \gamma:=\max(\frac 1 2 - \beta,0)$ yields $\frac 12 -\beta + \frac \epsilon 2 <\delta$ upon choosing a sufficiently small $\epsilon$.
Hence, we deduce
$$
\|(I-\pi_h) A^{-\beta} f\|_{\dH{2s}}  \preceq h^{2\alpha} \| A^{\delta}f\|.
$$
This, \eqref{e:first_step_AA} and \eqref{e:first_step_A} yield \eqref{e:first_step}.

\step{2} By the triangle inequality,
it suffices now to bound
\begin{equation}\label{newi}
\begin{split}
&\|(\pi_h A^{-\beta} - A_h^{-\beta}\pi_h)\|_{D(A^\delta)\to\dH{2s}}\\
&\qquad \le\frac{\sin(\beta \pi)}{\pi} \int_0^\infty \mu^{-\beta}\left\|
\pi_h(\mu+ A)^{-1} - (\mu+A_h)^{-1}\pi_h \right\|_{D(A^\delta)\to \dH{2s}}\, d\mu .
\end{split}
\end{equation}
Assuming without loss of generality that $h\leq 1$, we shall break the above integral into integrals on three
subintervals, namely,  $(0,1)$, 
$(1,h^{-2\alpha_*/\beta})$ and $(h^{-2\alpha_*/\beta},\infty)$.

\step{3} We start with $(h^{-2\alpha_*/\beta},\infty)$.
Recalling the definition of the operator norm \eqref{d:norm_DA} as well as the characterizations of the dotted space provided by Theorem~\ref{cont0_1_2} , we get 
\begin{align*}
\| \pi_h (\mu+A)^{-1}\|_{D(A^{\delta})\to \dH{2s}}  \preceq 
\| A^{\max(s-\delta,0)} (\mu+A)^{-1}\|,
\end{align*}
where we used in addition the stability of $\pi_h$ in $D(A^\delta)$ and the boundedness of $A^{-r}$ from $L^2(\Omega)$ to $L^2(\Omega)$, for $r\geq 0$ (see discussion below Remark~\ref{cz2}).
Hence, applying Lemma~\ref{l:characterization_discrete} yields
$$
\| \pi_h (\mu+A)^{-1}\|_{D(A^{\delta})\to \dH{2s}} \preceq \mu^{\max(s-\delta,0)-1}.
$$
Similarly, but using the discrete characterization provided by Lemma~\ref{l:characterization_discrete}, we obtain
$$
\|  (\mu+A_h)^{-1}\pi_h\|_{D(A^{\delta})\to \dH{2s}} \preceq \mu^{\max(s-\delta,0)-1}.
$$
Thus, invoking Lemma~\ref{l:both_bounds}, we deduce that
\begin{align*}
& \int_{h^{-2\alpha_*/\beta}}^\infty  \mu^{-\beta} \|
\pi_h(\mu+ A)^{-1} - (\mu+A_h)^{-1}\pi_h \|_{D(A^\delta)\to \dH{2s}} \, d\mu \\
&\qquad  \preceq \int_{h^{-2\alpha_*/\beta}}^\infty  \mu^{-\beta+\max(s-\delta,0)-1}d\mu
\preceq \int_{h^{-2\alpha_*/\beta}}^\infty  \mu^{\max(-\alpha_*,-\beta)-1}d\mu
 \preceq h^{2\alpha},
\end{align*}
because $\delta \geq s+\alpha_*-\beta$.

\step{4} For the remaining two cases, we use the identity
$$
\pi_h(\mu+ A)^{-1} - (\mu+A_h)^{-1}\pi_h = (\mu+A_h)^{-1}A_h\pi_h(T_a-T_{h,a})A(\mu+A)^{-1}.
$$
The latter, follows from the identification of Remark~\ref{identify} and that the observation that 
for $u\in D(A)$ and $v\in \mbV$,
$$(T_aAu,v)=A(T_aAu,T_l^*v)=(Au,T_l^*v)=A(u,T_l^*v)=(u,v),$$
i.e., $T_aAu=u$.  Also, it is easy to see that 
$A_h \pi_h T_{h,a} =\pi_h$.
Thus,  for $u\in D(A)$,
$$A_h \pi_h (T_a-T_{h,a}) A u = (\mu +A_h   )\pi_h u - \pi_h (\mu +A ) u,$$  
which leads to the desired identity.

\step{5} For $\mu \in (1,h^{-2\alpha_*/\beta})$, we write
\begin{align*}
 \int_1^{h^{-2\alpha_*/\beta}} & \mu^{-\beta}\|
 A_h(\mu+A_h)^{-1}\pi_h(T_a-T_{h,a})A(\mu+A)^{-1}\|_{D(A^\delta)\to \dH{2s}} \, 
d\mu  \\
&\le  \int_1^{h^{-2\alpha_*/\beta}} \mu^{-\beta}  \|
A_h(\mu+A_h)^{-1}\pi_h\|_{\dH{1-\alpha_*} \rightarrow \dH{2s}}
\|(T_a-T_{h,a})\|_{\dH{-1+\alpha_*}_a\rightarrow \dH{1-\alpha_*}}\\
&\qquad \qquad \qquad
\|A(\mu+A)^{-1}\|_{D(A^\delta) \rightarrow \dH{-1+\alpha_*}_a}
 d\mu
\end{align*}

Now,  the definition \eqref{d:norm_DA} of $\|.\|_{D(A^\delta)}$ together with the characterization of the negative spaces provided in Remark~\ref{identify},
imply that
$$
\| A(\mu+A)^{-1}\|_{D(A^\delta) \to \dot H^{\alpha_*-1}_a} \preceq  \|A^{(1+\alpha_*)/{2}-\delta}(\mu+A)^{-1}\| \preceq \mu^ {(\alpha_*-1)/2-\delta}. 
$$
In addition,  using  Lemma \ref{l:characterization_discrete},
we obtain
\begin{align*}
 \| A_h(\mu+A_h)^{-1}\pi_h\|_{\dot H^{1-\alpha_*} \to  \dH{2s}} &=
 \sup_{f\in \dot H^{1-\alpha_*}} \frac{\|
   A_h(\mu+A_h)^{-1}\pi_h f\|_{\dH{2s}}}{\|\pi_h f\|_{\dot H^{1-\alpha_*}}}\\
 &\preceq  \sup_{f\in \dot H^{1-\alpha_*}}
 \frac{\| A_h(\mu+A_h)^{-1}\pi_h
   f\|_{\dH{2s}}}{\|A_h^{(1-\alpha_*)/2}\pi_h f\|}\\
&\preceq  \sup_{g\in \mathbb V_h} \frac{\|
  A_h^{\frac{1+\alpha_*}{2}+s}(\mu+A_h)^{-1}g\|}{\|g\|}\\
&\preceq  \mu^{\frac{-1+\alpha_*}{2}+s}.
\end{align*}

The above three estimates together with 
Lemma~\ref{FEerror} yield
\begin{align*}
\int_1^{h^{-2\alpha_*/\beta}}  &\mu^{-\beta}\| A_h(\mu+A_h)^{-1}\pi_h(T_a-T_{h,a})A(\mu+A)^{-1}\|_{D(A^\delta)\to \dH{2s}} \, d\mu  \\
&\preceq  h^{2\alpha_*} \int_1^{h^{-2\alpha/\beta}} \mu^{-1+s+\alpha_*-\beta-\delta} \preceq  \left \{ 
\begin{array}{ll}
h^{2\alpha_*}\ln(2/h)&:\qquad \hbox{ if } \delta=s+\alpha_*-\beta,\\
h^{2\alpha_*}&:\qquad \hbox{ otherwise.}
\end{array}
\right.
\end{align*}

\step{6} Finally for $\mu \in (0,1)$, we write
\begin{align*}
 &\int_0^1 \mu^{-\beta} \| A_h(\mu+A_h)^{-1}\pi_h(T_a-T_{h,a})A(\mu+A)^{-1}\|_{D(A^\delta)\to \dH {2s}} \, d\mu \\
 & \qquad \preceq \int_0^1 \mu^{-\beta}    \| A_h(\mu+A_h)^{-1}\pi_h\|_{\dH{2s}\to \dH{2s}} \| T_a-T_{h,a}\|_{L^2 \to \dH{2s}} \| A(\mu+A)^{-1}\|_{L^2 \to L^2}\, d\mu  \\
 & \qquad \preceq  h^{2\alpha} \int_0^1 \mu^{-\beta} \, d\mu \preceq   h^{2\alpha}.
\end{align*}

\step{7} \modif{The proof of the theorem is complete upon collecting the estimates obtained in Steps 3,5 and 6}.
\end{proof}

Theorem~\ref{cont0_1_2} and the Kato Square Root Theorem characterize
$D(A^s)$ for $s\in [0,1/2]$.  The characterization can be extended to
$s\in (1/2,(1+\alpha)/2]$ when $A_a$ maps $\tH{2s}$ into
$\dH{2s-2}_a$.
This is of particular importance to characterize the regularity assumption $f\in D(A^\delta)$ in Theorem~\ref{FEinterror}.

\begin{theorem}[Characterization of $D(A^{(1+s)/2})$ for $s \in (0,\alpha \rbrack$] \label{dissqrrt}
    Suppose that  \eqref{coer} and 
  \eqref{bdd} hold.
  Assume furthermore that for  $s \in (0,\alpha ]$,   
  
 \begin{equation}\label{a:Ta} 
 T_a \text{ is an isomorphism from }\dot H_a^{-1+s} \text{into }\tH{1+s}.
 \end{equation}
   Then,
$$
D(A^{(1+s)/2})=\tH{1+s}
$$
with equivalent norms.
\end{theorem}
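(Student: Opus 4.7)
The plan is to prove both inclusions $D(A^{(1+s)/2})\subset \tH{1+s}$ and $\tH{1+s}\subset D(A^{(1+s)/2})$ separately, combining the isomorphism hypothesis on $T_a$ with the characterization $D((A^*)^{(1-s)/2})=\dH{1-s}$ supplied by Theorem~\ref{cont0_1_2}. The key observation enabling the use of that theorem is that $s\in(0,\alpha]\subset(0,1]$ places $(1-s)/2$ in $[0,1/2)$, exactly the range where Theorem~\ref{cont0_1_2} applies.

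For the forward inclusion, I would first treat $u\in D(A)$, which is dense in $D(A^{(1+s)/2})$. The heart of the argument is the decomposition
$$
A(u,v)=(Au,v)=(A^{(1+s)/2}u,(A^*)^{(1-s)/2}v),\qquad \forall v\in \mbV,
$$
which is legitimate since $u\in D(A^{(1+s)/2})$ (automatic from $u\in D(A)$) and $v\in \mbV\subset \dH{1-s}=D((A^*)^{(1-s)/2})$. Cauchy--Schwarz together with Theorem~\ref{cont0_1_2} then yield
$$
|A(u,v)|\preceq \|A^{(1+s)/2}u\|\,\|v\|_{\dH{1-s}},
$$
so the functional $A_au$ extends to an element of $\dH{-1+s}_a$ with norm dominated by $\|A^{(1+s)/2}u\|$. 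Applying the isomorphism $T_a:\dH{-1+s}_a\to\tH{1+s}$ to the identity $u=T_aA_au$ places $u$ in $\tH{1+s}$ with $\|u\|_{\tH{1+s}}\preceq\|A^{(1+s)/2}u\|$, and density extends the conclusion to all of $D(A^{(1+s)/2})$.

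For the reverse inclusion, given $u\in \tH{1+s}$ the isomorphism produces $F:=A_au\in \dH{-1+s}_a$. Since $s>0$, the subspace $L^2(\Omega)$ (identified with functionals via Remark~\ref{identify}) is dense in $\dH{-1+s}_a$, so there exist $f_n\in L^2(\Omega)$ with $f_n\to F$ in $\dH{-1+s}_a$. Setting $u_n:=T_af_n=A^{-1}f_n\in D(A)$, the functional calculus identity
$$
A^{(1+s)/2}u_n=A^{(1+s)/2}A^{-1}f_n=A^{-(1-s)/2}f_n,
$$
combined with Remark~\ref{identify}, gives $\|A^{(1+s)/2}u_n\|\approx\|f_n\|_{\dH{-1+s}_a}$. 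Thus $\{A^{(1+s)/2}u_n\}$ is Cauchy in $L^2(\Omega)$ while $u_n\to u$ in $\tH{1+s}$ by boundedness of $T_a$; the closedness of $A^{(1+s)/2}$ then places $u\in D(A^{(1+s)/2})$ with a matching upper bound $\|A^{(1+s)/2}u\|\preceq\|u\|_{\tH{1+s}}$.

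The main obstacle is rigorously justifying the two operator-calculus manipulations $A(u,v)=(A^{(1+s)/2}u,(A^*)^{(1-s)/2}v)$ and $A^{(1+s)/2}A^{-1}=A^{-(1-s)/2}$. Both are composition rules for fractional powers of a non-Hermitian m-accretive operator together with its form adjoint, and while they ultimately rest on the functional calculus framework of \cite{lunardi,haase} already used elsewhere in the paper, some care is required since fractional powers of non-self-adjoint operators do not commute as freely with one another as in the Hermitian case.
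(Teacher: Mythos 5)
Your proposal is correct, and while it rests on the same two pillars as the paper's proof --- the isomorphism \eqref{a:Ta} and the characterization $D((A^*)^{(1-s)/2})=\dH{1-s}$ from Theorem~\ref{cont0_1_2} --- it differs in mechanics in both directions. For the inclusion $D(A^{(1+s)/2})\subset\tH{1+s}$ the paper simply cites Theorem~\ref{t:characterization_hdot_onewayc}, whose proof is the pairing computation you perform but routed through the adjoint solution operator $T_l^*$ and hence through Assumption~\ref{ellreg}; your version, which views $A_au$ as an antilinear functional and applies $T_a$ to $u=T_aA_au$, has the small advantage of invoking only the hypothesis \eqref{a:Ta} actually stated in the theorem. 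For the reverse inclusion the paper uses the same approximation $u_n=T_aF_n$ with $F_n\in L^2(\Omega)$ converging to $A_au$ in $\dH{s-1}_a$, but it then establishes the a priori bound $\|A^{(1+s)/2}w\|\preceq\|w\|_{H^{1+s}}$ for $w\in D(A)\cap\tH{1+s}$ by writing $A^{(1+s)/2}w=A^{-(1-s)/2}Aw$, pairing with $v\in D(A^*)$, moving $A^{-(1-s)/2}$ onto $v$ and using $\|(A^*)^{-(1-s)/2}v\|_{\dH{1-s}}\preceq\|v\|$, and finally passes to the limit in the approximating sequence; you instead compute $A^{(1+s)/2}u_n=A^{-(1-s)/2}f_n$ directly, invoke the norm equivalence of Remark~\ref{identify} (which is essentially the same duality estimate), and conclude via closedness of $A^{(1+s)/2}$. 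Your variant makes the equivalence $\|A^{(1+s)/2}u\|\approx\|A_au\|_{\dH{s-1}_a}$ explicit, at the cost of appealing to closedness of the fractional power, which the paper's dense-subset formulation avoids. Finally, the two operator-calculus steps you flag as the main obstacle are already covered by material recalled in Section~\ref{s:notation}: the identity $A^{(1+s)/2}A^{-1}f=A^{-(1-s)/2}f$ is exactly the composition rule of Theorem~4.6 of \cite{lunardi} quoted there (applied to $A^{-1}f\in D(A)$), and the pairing identity $(Au,v)=(A^{(1+s)/2}u,(A^*)^{(1-s)/2}v)$ can be justified, as in the paper's own proofs, by first taking $v\in D(A^*)$ --- where only the stated mutual adjointness of $A^{-(1-s)/2}$ and $(A^*)^{-(1-s)/2}$ is needed --- and then extending by density of $D(A^*)$ in $\dH{1-s}$.
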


\begin{proof}  By Theorem~\ref{t:characterization_hdot_onewayc},
  we need only prove that $\tH{1+s}\subset D(A^{
    (1+s)/2})$.
    We first observe that 
 $D(A)\cap
\tH{1+s}$  is dense in 
$\tH{1+s}$.  
Indeed, if $w$ is in $\tH{1+s}$ then \eqref{a:Ta} implies that $A_a w$
is in $\dot H_a^{s-1}$.  As $\dot H_a^0$ is dense in $\dot H_a^{s-1}$,
there is a sequence $F_n\in \dot H_a^0$ converging to $A_a w$
in $\dot H_a^{s-1}$.  Setting
$u_n := (A_a)^{-1} F_n$, elliptic regularity implies that $u_n $
converges to $w$ in $\tH{1+s}$.  
Clearly $u_n $ is in $D(A)$,
i.e., $D(A)\cap \tH{1+s}$ is dense in 
$\tH{1+s}$ as claimed. 

Suppose that $u\in D(A)\cap \tH{1+s}$.
We first show that 
\beq
\|A^{ (1+s)/2} u\| \le C \|u \|_{H^{1+s}}.
\label{first}
\eeq
For $v\in D(A^*)$ and $\delta := (1-s)/2\in [0,1/2)$,
$$(A^{(1+s)/2} u,v) = ( A^{-\delta} A u,v) =(Au,(A^*)^{-\delta}v).
$$
Since $u\in D(A)$ and $(A^*)^{-\delta}v\in D(A^*)\subset \mathbb V$, 
$$\bal
|( A u, (A^*)^{-\delta} v)|&=|\langle A_a u ,  (A^*)^{-\delta} v\rangle|
\le \|A_au\|_{\dot H_a^{-1+s}}
\|\Ats^{-\delta} v\|_{\dH {1-s}}\\
&\preceq \|u\|_{H^{1+s}}
\|\Ats^{-\delta} v\|_{\dH {1-s}}
\eal
$$ 
where we also used  \eqref{dual} and \eqref{a:Ta}.
Now, Theorem~\ref{cont0_1_2} ensures that
$$\|\Ats^{-\delta} v\|_{\dH {1-s}}\preceq \|\Ats^\delta \Ats^{-\delta} v\|= \|v\|.$$
Combining the above inequalities shows that   \eqref{first} holds for $u\in D(A)\cap 
\tH{1+s}$.
The inclusion 
$\tH{1+s} \subset D(A^{(1+s)/2})$ and  
\eqref{first} for  \modif{$u\in \tH{1+s} $} hold by density.  
\end{proof}

Let $T_{S,a}$ be defined similarly to $T_a$ but using the form $S(\cdot,\cdot)$.
The final result in this section shows that under suitable assumptions,
Lemma~\ref{l:characterization_discrete} holds for $s=1$.  This is
a discrete Kato Square Root Theorem.   Its proof was motivated by the
proof of the Kato Square Root Theorem given in \cite{Agranovich}.

\begin{theorem} [Discrete Kato Square Root Theorem]\label{t:discrete_kato}
  Suppose that  \eqref{coer} and 
  \eqref{bdd} hold.
  Assume furthermore that for some   $s \in (0,1/2)$,   
  $T_a$, $T_a^*$ and $T_{S,a}$  are isomorphisms from $\dot H_a^{-1+s}$
  into $\tH{1+s}$.
   Then,  there are positive constants $c,C$ independent of $h$ such that
$$ 
c \|v_h\|_{\mbV}\le \|A_h^{1/2} v_h\|\le C   \|v_h\|_{\mbV}, \Forall v_h\in \mbV_h.
$$
The analogous inequalities hold with $A_h^*$ replacing $A_h$ above.
\end{theorem}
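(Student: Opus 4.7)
I would aim to prove the upper bound $\|A_h^{1/2} v_h\| \le C \|v_h\|_{\mbV}$; the lower bound then follows by a duality argument exploiting the symmetric nature of the hypothesis. As a preliminary, I would establish the continuous Kato Theorem $D(A^{1/2}) = \mbV$ with equivalent norms by invoking Theorem~\ref{dissqrrt} for both $A$ and $A^*$, yielding $D(A^{(1+s)/2}) = D((A^*)^{(1+s)/2}) = \tH{1+s}$, and then complex-interpolating between $L^2(\Omega) = D(A^0)$ and $\tH{1+s}$: the uniform bound \eqref{ait} on $A^{it}$ ensures that $\{D(A^\gamma)\}$ forms a complex interpolation scale, and $D(A^{1/2}) = [L^2(\Omega), \tH{1+s}]_{1/(1+s)}$ is identifiable with $\mbV$.

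The crux is the uniform-in-$h$ comparison $\|(A^{1/2} - A_h^{1/2}) v_h\| \preceq \|v_h\|_{\mbV}$ for $v_h \in \mbV_h$. Balakrishnan's formula (with appropriate regularization) applied to both operators, together with $A(\mu+A)^{-1} = I - \mu(\mu+A)^{-1}$, gives the representation
$$
(A^{1/2} - A_h^{1/2}) v_h = \frac{1}{\pi} \int_0^\infty \mu^{1/2} \bigl[(\mu+A_h)^{-1}\pi_h - (\mu+A)^{-1}\bigr] v_h \, d\mu.
$$
I would split the integral at a critical value $\mu_\star \sim h^{-2\alpha}$ and estimate the resolvent difference in the spirit of Lemma~\ref{FEerror}, extended to the shifted problem $\mu I + A$: for small $\mu$, standard coercivity and form bounds suffice; for large $\mu$, the elliptic regularity on $T_a$ enters, augmented by the discrete characterization of Lemma~\ref{l:characterization_discrete} (which itself relies on the $T_{S,a}$ hypothesis through Theorem~\ref{dissqrrt} applied to $S$). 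Combined with the continuous Kato bound $\|A^{1/2} v_h\| \preceq \|v_h\|_{\mbV}$, this yields $\|A_h^{1/2} v_h\| \preceq \|v_h\|_{\mbV}$.

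The lower bound follows by the symmetric argument: since the hypothesis covers $T_a^*$, the upper bound applies also to $A_h^*$, giving $\|(A_h^*)^{1/2} v_h\| \preceq \|v_h\|_{\mbV}$. Using coercivity \eqref{coer} together with the identity $(A_h v_h, v_h) = (A_h^{1/2} v_h, (A_h^*)^{1/2} v_h)$ (coming from $A_h^{1/2} A_h^{1/2} = A_h$ and $(A_h^{1/2})^* = (A_h^*)^{1/2}$):
$$
c_0 \|v_h\|_{\mbV}^2 \le \Re(A_h v_h, v_h) \le |(A_h^{1/2} v_h, (A_h^*)^{1/2} v_h)| \le \|A_h^{1/2} v_h\|\, \|(A_h^*)^{1/2} v_h\| \preceq \|A_h^{1/2} v_h\|\, \|v_h\|_{\mbV},
$$
so $\|v_h\|_{\mbV} \preceq \|A_h^{1/2} v_h\|$, completing the equivalence. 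The main obstacle is the uniform-in-$h$ resolvent-difference estimate: the weight $\mu^{1/2}$ is non-integrable at both endpoints of $(0, \infty)$, so the integral splitting must be sharp, and elliptic regularity must be quantified in a $\mu$-dependent manner to yield an $h$-independent total bound.
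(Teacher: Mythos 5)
Your reduction of the lower bound to the two upper bounds (coercivity \eqref{coer} plus $c_0\|v_h\|_{\mbV}^2\le\|A_h^{1/2}v_h\|\,\|(A_h^*)^{1/2}v_h\|$) is exactly the paper's argument, and your route to the continuous identity $D(A^{1/2})=\mbV$ (Theorem~\ref{dissqrrt} for $A$ and $A^*$, then complex interpolation using the bounded imaginary powers \eqref{ait} and $\tH{1+s}=\dH{1+s}$ from the $T_{S,a}$ hypothesis) is plausible. The gap is in what you yourself call the crux: the uniform bound $\|(A^{1/2}-A_h^{1/2})v_h\|\preceq\|v_h\|_{\mbV}$ via the Balakrishnan representation. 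First, that representation of $A^{1/2}v_h$ is only conditionally convergent for $v_h\in D(A^{1/2})$ (the integrand decays only like $\mu^{-1}\|A^{1/2}v_h\|$ at infinity), so the ``appropriate regularization'' is doing real work. More seriously, with the tools you cite --- Lemma~\ref{FEerror}-type bounds, which carry no $\mu$-decay, and a splitting at $\mu_\star\sim h^{-2\alpha}$ --- the tail $\int_{\mu_\star}^{\infty}\mu^{1/2}\bigl\|\bigl[(\mu+A_h)^{-1}\pi_h-(\mu+A)^{-1}\bigr]v_h\bigr\|\,d\mu$ does not close. A triangle-inequality bound gives an integrand of size $\mu^{-1/2}\|v_h\|$, which is not integrable at infinity; and even isolating the piece $(I-\pi_h)(\mu+A)^{-1}v_h$ and using the best available regularity ($s'\le\alpha\le 1$) one only gets
$$
\mu^{1/2}\|(I-\pi_h)(\mu+A)^{-1}v_h\|\preceq h^{1+s'}\mu^{1/2}\|A^{s'/2}(\mu+A)^{-1}A^{1/2}v_h\|\preceq h^{1+s'}\mu^{(s'-1)/2}\|v_h\|_{\mbV},
$$
whose tail integral diverges for every choice of $\mu_\star$, since $(s'-1)/2>-1$. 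Closing this requires exploiting cancellation between the continuous and discrete resolvents at large $\mu$ together with inverse inequalities on $\mbV_h$, or proving $\mu$-uniform finite element error estimates for the shifted problems with decay strictly faster than $\mu^{-3/2}$; none of this is supplied, and the natural way to generate decay on the discrete side (bounds such as $\|A_h^{1/2}\pi_h\|_{\mbV\to L^2}\preceq 1$) is precisely the statement being proved, so the argument also risks circularity.

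For contrast, the paper never compares $A_h^{1/2}$ with $A^{1/2}$ and never needs the continuous Kato theorem: it works entirely at the discrete level. Using Lemma~\ref{l:characterization_discrete} and the isomorphism hypothesis on $T_a$ it shows $\|A_h^{(1+s)/2}u_h\|\preceq\|u_h\|_{\dH{1+s}}$; using the $T_{S,a}$ hypothesis (uniform $\tH{1+s}$-stability of the $S$-Ritz projection) it shows $\|u_h\|_{\dH{1+s}}\preceq\|S_h^{(1+s)/2}u_h\|$; combining and interpolating against the trivial bound at exponent $0$ yields $\|A_h^{1/2}u_h\|\preceq\|S_h^{1/2}u_h\|=\|u_h\|_{\mbV}$. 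If you wish to keep your perturbative route, you must supply the missing $\mu$-dependent resolvent-difference estimates with sufficient tail decay; as written, the key step is unproven.
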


\begin{proof}  
In this proof $C$ denotes a generic constant independent of $h$. 
The assumption on $T_{S,a}$ implies that for
  $t\in (0,s]$, $\tH{1+t}=\dH {1+t}$,
with equivalent norms \cite{bp-fractional}.
  
We first observe that it suffices to show that
\beq
\| A_h^{1/2} u_h \|\le C \|u_h\|_\mbV,\Forall u_h\in \mbV_h
\label{toshow1/2}
\eeq
along with the analogous inequality involving $A_h^*$.
Indeed, if \eqref{toshow1/2} holds then by \eqref{coer},
$$\bal
c_0 \|u_h \|_\mbV^2 &\le |(A_h u_h,u_h)| \le
\|A_h^{1/2}u_h\| \|(A_h^*)^{1/2} u_h \| \\
&\le C \|(A_h^*)^{1/2} u_h \| \| u_h \|_\mbV
\eal
$$
and hence
$$ c_0 \|u_h \|_\mbV\le C \|(A_h^*)^{1/2} u_h \|.$$
The proof of the lower bound involving $A_h^{1/2}$ is similar.

Applying  Lemma~\ref{l:characterization_discrete} gives, for $u_h\in \mbV_h$,
\beq
\bal \|A_h^{(1+s)/2} u_h\|&=\sup_{\theta\in \mbV_h }
\frac {(A_h^{(1+s)/2}u_h,(A_h^*)^{(1-s)/2}\theta)}
      {\|(A_h^*)^{(1-s)/2}\theta\|}\le C\sup_{\theta\in \mbV_h }
\frac {(A u_h,\theta)}
      {\|\theta\|_{\dH {1-s}}}\\
        & \le C\|A_au_h\|_{\dH{s-1}} \le C \|u_h \|_{\dH {1+s}}
\eal
       \label{ahbound}
\eeq
where we used the assumption on $T_a = (A_a)^{-1}$.        

Let $\tph$ denote the $S$-elliptic projection onto $\mbV_h$, i.e.,
for $v\in \mbV$, $w_h=\tph v\in \mbV_h$   solves
$$S(w_h,\theta_h)=S(v,\theta_h),\Forall \theta_h\in \mbV_h.$$
It is a consequence 
of the isomorphism assumption on $T_{S,a}$ that
$\tph$ is a uniformly (independent of $h$) bounded operator on
$\tH{1+s}$
(see, e.g., \cite{MR1052086}).
Thus, recalling the eigenvalue decomposition \eqref{e:dot_intermediate}, it holds for $u_h\in \mbV_h$,
$$\bal \|u_h\|_{\dH {1-s}} &= \sup_{\phi\in \dH {1+s} }
\frac {S(u_h,\phi)} {\|\phi\|_{\dH {1+s}}} =
\sup_{\phi\in \dH {1+s} }
\frac {S(u_h,\tph \phi)} {\|\tph\phi\|_{\dH {1+s}}} \frac
      {\|\tph\phi\|_{\dH {1+s}}}
      {\|\phi\|_{\dH {1+s}}}\\
      &\le C\sup_{\phi_h\in \mbV_h }
      \frac {S(u_h,\phi_h)} {\|\phi_h\|_{\dH {1+s}}}.
      \eal
$$
Similarly, for all $u_h\in \mbV_h$,
$$ \|u_h\|_{\dH {1+s}} = \sup_{\phi_h\in \mbV_h }
\frac {S(u_h,\phi_h)} {\|\phi_h\|_{\dH {1-s}}}.
$$
Thus, the characterization \eqref{sh1} gives
$$\|u_h\|_{\dH {1+s}}\le C \sup_{\phi_h\in \mbV_h }
\frac {S(u_h,\phi_h)} {\|S_h^{(1-s)/2}\phi_h\|}
= C\|S_h^{(1+s)/2}u_h\|.$$
Combining this with \eqref{ahbound} gives
$$\|A_h^{(1+s)/2} u_h\|\le C \|S_h^{(1+s)/2}u_h\|, \Forall u_h\in \mbV_h.$$
 Interpolating this result with the trivial inequality
$$\|A_h^{0} u_h\|\le \|S_h^0u_h\|,\Forall u_h\in \mbV_h,$$
gives 
$$\|A_h^{1/2} u_h\|\le C \|S_h^{1/2}u_h\|=C\|u_h\|_\mbV,\Forall u_h\in \mbV_h$$
and verifies \eqref{toshow1/2}. The proof for the analogous inequality
  involving $A_h^*$ is similar.  
  \end{proof}

\section{Exponentially Convergent sinc Quadrature.}\label{s:SINC}

Theorem~\ref{FEinterror} and Remark~\ref{r:conv_s12} provide estimates for the errors $\| (A^{-\beta} - A_h^{-\beta}\pi_h) f\|_{\dH{2s}}$.
We now apply an exponentially convergent sinc quadrature (see,  for
example, \cite{lundbowers}) to approximate the integral in 
 \eqref{A-betah}.
Since the argument below does not require the operator
$A_h^{-\beta}\pi_h$ to be discrete, the analysis includes and
focuses on the case of
$A^{-\beta}$.

The change of variable and sinc quadrature approximations for $A$ 
are thus
$$
A^{-\beta}= \frac{\sin(\pi \beta)}{\pi} \int_{-\infty}^\infty e^{(1-\beta)y} (e^yI+A)^{-1} ~ dy
$$
and
$$
\cQ^{-\beta}_k(A) := \frac{k\sin(\pi \beta)}{\pi}      \sum_{\ell=-N}^N e^{(1-\beta) y_\ell} (e^{y_\ell}I+  A)^{-1} .
$$
Here, for any positive integer $N$, $y_\ell:=\ell k$ and  $k:=1/\sqrt N
$.

\def\bD{{\overline {D}}}
To estimate the quadrature error, we start by defining $D_{\pi/2} := \{ z\in
\CC,\ |\Im(z)|<\pi/2\}$ and denote by $\bD_{\pi/2} $ its closure. 
Here $\omega = \arctan(\eta)$ and $\eta$ is the index of the sesquilinear form $A(.,.)$, see \eqref{index}.
For any functions $u,v \in L^2(\Omega)$ and $z\in \CC$  with $-e^z$ not in the spectrum of
$A$, we define
$$
f(z;u,v):=  e^{(1-\beta)z}\left( (e^z I+A)^{-1}u,v \right).
$$
Note that $\Re(e^z)$ is non negative for $z\in \bD_{\pi/2} $ and hence 
Theorem~\ref{resbound} and Remark~\ref{cz2} imply that $f(z,u,v)$ is
well defined for $z\in \bD_{\pi/2} $. 

We apply the classical analysis for these types of quadrature approximations given in \cite{lundbowers} with a particular attention on deriving estimates uniform in $u,v \in L^2(\Omega)$.
Using the resolvent estimate (Theorem~\ref{resbound}) when  $\Re(z) > 0$
and Remark~\ref{cz2} when $\Re(z) \leq 0$ gives
$$ \|(e^zI+A)^{-1}\|\le \left \{\bal |e^{-z}|/\sin(\pi/2-\omega)&:\quad  \hbox{ for } z\in
\bD_{\pi/2},\ \Re(z)>0,\\
\frac2{c_0} &:\quad  \hbox{ for } z\in
\bD_{\pi/2},\ \Re(z)\le 0\eal\right .$$
\modif{It is a consequence of the above inequality  that $f(t,u,v)$ is an analytic
function of $t$ for 
$t\in D_{\pi/2}$.  Moreover,
for $z \in \bD_{\pi/2} $:}
\begin{equation}
\begin{split}
&|e^{(1-\beta)z} \left( (e^z I+A)^{-1}u,v \right)| \\
& \qquad \qquad \le \|u\| \|v\| \left\lbrace 
\begin{array}{ll}
 e^{-\beta\Re(z)}/\sin(\pi/2-\omega)&:\qquad \hbox{ for } \Re(z) >0,\\
 \frac{2}{c_0} e^{(1-\beta) \Re(z)}&:\qquad \hbox{ for } \Re(z) \le 0.
\end{array} \right . 
\end{split}
\label{dbinf}
\end{equation}
This implies that 
\begin{equation}
\begin{split}
\int _{-\infty}^\infty &( |f(y-i\pi/2;u,v) | +  |f(y+i\pi/2;u,v)
|) \, dy  \\& \le \left(2(\beta\sin(\pi/2-\omega))^{-1} + 4((1-\beta)c_0)^{-1}\right):=N(D_{\pi/2})\|u\|\|v\|.
\end{split}
\label{nbinf}
\end{equation}
In addition, applying \eqref{dbinf} gives
$$
\int_{-\pi/2}^{\pi/2} | f(t+iy,u,v)|dy \le  C,\Forall t\in \RR.
$$
\modif{Hence, we can}  apply Theorem 2.20 of \cite{lundbowers}  to conclude  
that for $k>0$,
\beq
  \bigg| 
\int_{-\infty}^\infty f(y;u,v)   \ dy-
k\sum_{\ell=-\infty}^\infty f(\ell k;u,v) 
\bigg|
 \le
\frac{N(D_{\pi/2};u,v)}{2\sinh(\pi^2/(4k))} e^{-\pi^2/(4k)}.
\label{220res}
\eeq
\modif{
This leads to  the following result for the sinc quadrature error.
}

\begin{theorem}[sinc Quadrature Error]   \label{l:exp}
For $N>0$ and $k:=1/\sqrt{N}$, let $\cQ^{-\beta}_k(\cdot)$ be defined by 
\eqref{quadh}.
Then for $s\in [0,1)$, there exists a constant $C(\beta)$ independent of $k$ such that
\begin{equation}
\begin{split}
\|A^{-\beta} - \cQ^{-\beta}_k(A) \|_{\dH{s} \to \dH{s}} \le
C(\beta) &
\bigg[
\frac{N(D_{\pi/2})}{2\sinh(\pi^2/(4k))}e^{-\pi^2/(4k)} \\
&+ \frac 1 \beta e^{-\beta/k} +
\frac{2}{(1-\beta)c_0} e^{-(1-\beta)/k}\bigg].
\end{split}
\label{aquad}
\end{equation}
Similarly, there exists a constant $C(\beta)$ independent of $k$ and $h$ such that
\begin{equation}
\begin{split}
 \|(A_h^{-\beta} - \cQ^{-\beta}_k(A_h))\pi_h \|_{\dH{s} \to \dH{s}}
 \le C(\beta) &
 \bigg[
\frac{N(D_{\pi/2})}{2\sinh(\pi^2/(4k))}e^{-\pi^2/(4k)}\\
& + \frac 1 \beta e^{-\beta/k} +
\frac{2}{(1-\beta)c_0} e^{-(1-\beta)/k}
\bigg].
\end{split}
\label{ahquad}
\end{equation}
\end{theorem}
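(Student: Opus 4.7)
The plan is to reduce both \eqref{aquad} and \eqref{ahquad} to an $L^2\to L^2$ estimate, and then promote them to $\dH{s}\to\dH{s}$ using commutativity of $A^{s/2}$ (resp.\ $A_h^{s/2}$) with the resolvent. For the $L^2$ estimate I would test the error against an arbitrary $v\in L^2(\Omega)$ through the sesquilinear pairing and decompose
\[
\int_{-\infty}^\infty f(y;u,v)\,dy - k\sum_{\ell=-N}^N f(\ell k;u,v) = \left[\int_{-\infty}^\infty - k\sum_{\ell=-\infty}^\infty \right] + k\sum_{|\ell|>N} f(\ell k;u,v),
\]
controlling the bracketed term by \eqref{220res} together with the uniform bound $N(D_{\pi/2};u,v)\le N(D_{\pi/2})\|u\|\|v\|$ from \eqref{nbinf}, which produces the factor $e^{-\pi^2/(4k)}/\sinh(\pi^2/(4k))$.

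Next I would bound the truncation tail $k\sum_{|\ell|>N}|f(\ell k;u,v)|$ by splitting at $\ell=0$ and inserting the pointwise bound \eqref{dbinf}. For $\ell>N$ the positive-real-part branch gives a geometric series
\[
k\sum_{\ell>N} e^{-\beta\ell k} \le \frac{k\,e^{-\beta(N+1)k}}{1-e^{-\beta k}} \preceq \frac{1}{\beta}\,e^{-\beta Nk} = \frac{1}{\beta}\,e^{-\beta/k},
\]
using $Nk=1/k$ and $1-e^{-\beta k}\ge \beta k/2$ for $k$ small. An analogous computation on $\ell<-N$ with the negative-real-part branch of \eqref{dbinf} produces the $\frac{2}{(1-\beta)c_0}e^{-(1-\beta)/k}$ contribution. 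Taking a supremum over unit $v\in L^2(\Omega)$ then converts these pointwise pairing bounds into the $L^2\to L^2$ operator-norm estimate with exactly the three claimed exponentials.

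To promote the estimate to $\dH{s}\to\dH{s}$ I would invoke Theorem~\ref{cont0_1_2} to replace $\|\cdot\|_{\dH{s}}$ by the equivalent $\|A^{s/2}\cdot\|$ on $D(A^{s/2})=\dH{s}$. Since \eqref{anegcom}--\eqref{aposcom} assert that $A^{s/2}$ commutes with $A^{-\beta}$ and with each resolvent $(e^{y_\ell}I+A)^{-1}$, it also commutes with $\cQ^{-\beta}_k(A)$, so
\[
\|(A^{-\beta}-\cQ^{-\beta}_k(A))v\|_{\dH{s}} \preceq \|(A^{-\beta}-\cQ^{-\beta}_k(A))A^{s/2}v\|,
\]
and the $L^2$ bound applied to $A^{s/2}v$ completes \eqref{aquad}. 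For \eqref{ahquad} the same scheme applies with $A$ replaced by $A_h$ on $\mbV_h$: the coercivity constant $c_0$, the index $\eta$, and the sectorial angle $\omega$ are inherited from the underlying form, so \eqref{dbinf}, \eqref{nbinf} and \eqref{220res} hold for $A_h$ uniformly in $h$. The discrete $\dH{s}$-to-$A_h^{s/2}$ equivalence on $\mbV_h$ is supplied by Lemma~\ref{l:characterization_discrete}, while $\pi_h$ is absorbed using its $\dH{s}$-stability (from \eqref{pihb} and interpolation between $s=0$ and $s=1$).

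The main obstacle I anticipate is ensuring that $N(D_{\pi/2};u,v)$ in \eqref{220res} genuinely factors as $\|u\|\|v\|\,N(D_{\pi/2})$ with $N(D_{\pi/2})$ independent of $u$ and $v$, so that the scalar pairing argument upgrades to a bona fide operator-norm bound. This is precisely what \eqref{nbinf} delivers through the sectorial resolvent estimates of Theorem~\ref{resbound} together with Remark~\ref{cz2}. Everything else reduces to bookkeeping: tracking the three distinct sources of exponential decay and verifying that $c_0$ and $\omega$ appear with the correct dependence in each one.
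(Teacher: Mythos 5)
Your proposal is correct and follows essentially the same route as the paper: the $s=0$ case via \eqref{220res} together with the uniform bound \eqref{nbinf} and geometric-series tail estimates from \eqref{dbinf}, then promotion to $\dH{s}\to\dH{s}$ by commutativity of $A^{s/2}$ (resp.\ $A_h^{s/2}$) with the resolvents, Theorem~\ref{cont0_1_2}, Lemma~\ref{l:characterization_discrete} and the $\dH{s}$-stability of $\pi_h$, with uniformity in $h$ coming from the form-level constants $c_0$ and $\omega$. The only (harmless) difference is bookkeeping: you attach the $\beta$-decay to the positive tail and the $(1-\beta)$-decay to the negative tail, which is what \eqref{dbinf} actually yields, whereas the paper's displayed tail estimates list the two associations the other way around; since both exponentials appear in the final bound, the conclusion is unaffected.
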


\begin{proof}
Both estimates when $s=0$ directly follow from \eqref{220res}
and the estimates
\begin{align*}
&k  \sum_{\ell=-N-1}^{-\infty} |f(\ell k; u,v) | 
\le \frac 1 { \beta} e^{-\beta/k} \|u\|\|v\|,  \\
&k  \sum_{\ell=N+1}^\infty |f(\ell k;u,v)|
\le\frac 2{(1-\beta)c_0}
e^{-(1-\beta)/k} \|u\|\|v\|,
\end{align*}
(which are direct consequences of \eqref{dbinf}).

For the case $s\in (0,1)$, we first consider  \eqref{aquad}.
Using Theorem~\ref{cont0_1_2} and the commutativity of  $A$ and  $(e^{y}I+A)^{-1}$, we have
$$
 \|A^{-\beta} - \cQ^{-\beta}_k(A) \|_{\dH s \to \dH{s}} \preceq 
 \|A^{s/2}(A^{-\beta} - \cQ^{-\beta}_k(A))A^{-s/2} \|.
$$ 
Applying \eqref{anegcom} shows that the right hand side above equals 
$$ \|A^{-\beta} - \cQ^{-\beta}_k(A) \|$$
and so the desired estimate follows again from the $s=0$ case.

For \eqref{ahquad} when $s>0$, we apply 
Lemma~\ref{l:characterization_discrete} to see that
\begin{align*}
 \|(A_h^{-\beta} - \cQ^{-\beta}_k(A_h))\pi_h \|_{\dH{s} \to \dH{s}}
 &\le \|A_h^{s/2}(A_h^{-\beta} - \cQ^{-\beta}_k(A_h))A_h^{-s/2} \|
 \|\pi_h\|_{\dH{s} \to \dH{s}}\\
&\preceq 
  \|(A_h^{-\beta} - \cQ^{-\beta}_k(A_h))\pi_h \|.
\end{align*}  
The last inequality followed from commutativity and  
the fact that $\pi_h$ is a bounded operator on
$\dH s$.
The result now follows from the $s=0$ case.
\end{proof}

\begin{remark}[Critical Case $s=1$]
The quadrature error estimate still holds when $s=1$ provided
that the discrete and continuous 
Kato Square Theorems hold.   These results are given by
Remark~\ref{re:KatoT} for the continuous operator $A$ and 
Theorem~\ref{t:discrete_kato} for the discrete operator $A_h$.
\end{remark}

\begin{remark}[Exponential Decay] \label{rem:MN} The error from the three exponentials above 
can   essentially be equalized by setting
\beq
\cQ^{-\beta}_k (A_h):= \frac{k\sin(\pi \beta)}{\pi}      \sum_{\ell=-M}^N
e^{(1-\beta) y_\ell} (e^{y_\ell}I+  A_h)^{-1} 
\label{inforder}
\eeq 
with
$$\pi^2/(2k)\approx 2\beta k M\approx (2-2\beta) kN.$$
Thus, given $k>0$, we set 
$$ M=\bigg\lceil \frac {\pi^2}{4\beta k^2}\bigg\rceil \quad \hbox{ and }\quad
N=\bigg \lceil \frac {\pi^2}{4(1-\beta) k^2}\bigg \rceil  
$$
and get the estimate
$$
 \|(A_h^{-\beta} - \cQ^{-\beta}_k(A_h))\pi_h \|_{\dH{s} \to \dH{s}}
 \le C(\beta)
 \bigg[ \frac 1 {2\beta} + \frac 1 {2(1-\beta)\lambda_0} \bigg] \bigg[  \frac {e^{-\pi^2/(4k)}}{\sinh(\pi^2/(4k))}+
e^{-\pi^2/(2k)} \bigg].
$$
We note that the right hand side above asymptotically behaves like 
$$C(\beta) \bigg[ \frac 1 {2\beta} + \frac 1 {2(1-\beta)\lambda_0} \bigg]  e^{-\pi^2/(2k)}$$
as $k\rightarrow 0$. 
\end{remark}

This, together with the finite element approximation estimates provided in Theorem~\ref{FEinterror} and Remark~\ref{r:critical_s1}, yields the fully discrete convergence estimate stated below.

\begin{corollary}[Fully Discrete Convergence Estimate]\label{c:fully}
Suppose that  \eqref{coer} and
  \eqref{bdd} as well as
  the elliptic regularity Assumption~\ref{ellreg} hold.  
  Given $s\in [0,\frac 12)$, set $\alpha_*:=\frac 1 2 (\alpha+\min(1-2s,\alpha))$ and 
  $\gamma:=\max(s+\alpha^*-\beta,s)$. For $\delta \geq \gamma$, then there exists a constant $C$ independent of $k<1$ and $h$ such that
$$\bal
\| (A^{-\beta  } - \cQ_k^{-\beta}(A_h)\pi_h) f\|_{\dH{2s}} 
\leq C_{\delta,h}  h^{2\alpha_*}\|A^\delta f\| &+ C
e^{-\pi^2/2k}\|f\|_{\dH{2s}},\\
&\quad  \forall f\in
D(A^{\delta})\cap \dH{2s},
\eal
$$
where $C_{\delta,h}$ is given by \eqref{cdelta} 
and $\cQ_h^{-\beta}$ is defined by \eqref{quadh}.

In addition, if the continuous and discrete Kato Square Root Theorems 
hold (see Remark~\ref{re:KatoT} and Theorem~\ref{t:discrete_kato})
then the above estimate also holds for $s=\frac 1 2$.
\end{corollary}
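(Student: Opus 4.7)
The plan is to introduce the intermediate operator $A_h^{-\beta}\pi_h$ and apply the triangle inequality
\[
\|(A^{-\beta}-\cQ^{-\beta}_k(A_h)\pi_h)f\|_{\dH{2s}} \le \|(A^{-\beta}-A_h^{-\beta}\pi_h)f\|_{\dH{2s}} + \|(A_h^{-\beta}\pi_h-\cQ^{-\beta}_k(A_h)\pi_h)f\|_{\dH{2s}}.
\]
This splits the analysis cleanly into the two components already established: the spatial finite element error and the sinc quadrature error.

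For the first term, I would invoke Theorem~\ref{FEinterror} directly: under the elliptic regularity Assumption~\ref{ellreg} and the hypotheses on $\delta$ and $\gamma$, this gives the bound $C_{\delta,h} h^{2\alpha_*}\|A^\delta f\|$ with $C_{\delta,h}$ as in \eqref{cdelta}. Note that the requirement $\gamma=\max(s+\alpha_*-\beta,s)$ (as opposed to $\gamma=\max(s+\alpha_*-\beta,0)$ in Theorem~\ref{FEinterror}) is needed so that $D(A^\delta)\subset \dH{2s}$, ensuring the second term is well defined on $D(A^\delta)$; this follows from $\delta\ge \gamma\ge s$ combined with the characterization of $\dH{2s}$ via Theorem~\ref{cont0_1_2} (for $2s<1$) and the Kato Square Root Theorem in the endpoint case. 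For $s=\tfrac12$, the extension of Theorem~\ref{FEinterror} to this critical case provided by Remark~\ref{r:critical_s1} is used, which in turn invokes the continuous and discrete Kato Square Root Theorems.

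For the second term, I would appeal to Theorem~\ref{l:exp}, specifically the bound \eqref{ahquad} applied with $s$ replaced by $2s\in[0,1)$, together with the asymptotic equalization of the three exponentials described in Remark~\ref{rem:MN}. With the choices $M=\lceil \pi^2/(4\beta k^2)\rceil$ and $N=\lceil \pi^2/(4(1-\beta)k^2)\rceil$, all three terms in \eqref{ahquad} are bounded by a constant multiple of $e^{-\pi^2/(2k)}$, giving
\[
\|(A_h^{-\beta}\pi_h-\cQ^{-\beta}_k(A_h)\pi_h)f\|_{\dH{2s}} \le C\, e^{-\pi^2/(2k)}\|f\|_{\dH{2s}}.
\]
For $2s=1$ (i.e.\ $s=\tfrac12$), this quadrature estimate requires the discrete Kato Square Root Theorem (Theorem~\ref{t:discrete_kato}) to carry through, matching exactly the hypothesis of the second assertion of the corollary.

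Combining these two bounds yields the desired estimate; the argument is essentially a bookkeeping exercise once one verifies that the hypotheses of Theorem~\ref{FEinterror} and Theorem~\ref{l:exp} are simultaneously satisfied. The only subtle point -- and the step I would single out as the main check -- is the endpoint case $s=\tfrac12$, where \emph{both} the spatial error bound and the quadrature bound rely on the (continuous and discrete) Kato Square Root results; this is precisely why the second statement of the corollary requires these theorems as an additional hypothesis.
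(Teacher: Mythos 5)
Your proposal is correct and matches the paper's (implicit) argument exactly: the corollary is stated as a direct consequence of combining Theorem~\ref{FEinterror} (with Remark~\ref{r:critical_s1} for $s=\tfrac12$) and the sinc quadrature bound of Theorem~\ref{l:exp} balanced as in Remark~\ref{rem:MN}, via the triangle inequality through $A_h^{-\beta}\pi_h$. Your remarks on why $\gamma=\max(s+\alpha_*-\beta,s)$ and on the role of the continuous and discrete Kato Square Root Theorems at the endpoint are consistent with the paper's intent.
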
  

\section{Numerical Illustrations for the Convection-Diffusion Problem}\label{s:numerical}

In order to illustrate the performance of the proposed algorithm,  we
set $\mbV=H^1_0(\Omega)$ with $\Omega=(0,1)^2$ and for
$b\in \mathbb R$, consider the sesquilinear form
$$
A(u,v):=\int_\Omega (\nabla u \cdot \nabla
\bar v+b(u_x+u_y)\bar v),\Forall u,v\in \mbV.
$$
This form is regular and the corresponding regularly
accretive operator $A$ has domain $H^2(\Omega)\cap \mbV$.

In general, it is difficult to \modif{analytically} compute solutions to $u=A^{-\beta} f$
although it is possible in this case.  Indeed, we
consider the Hermitian form:
$$
\tS(u,v)=\int_\Omega \bigg (\nabla u \cdot \nabla
\bar v+\frac{b^2}2u\bar v\bigg),\Forall u,v\in \mbV.$$
Fix $f\in L^2(\Omega)$ and for $\mu\ge 0$, let $w\in \mbV$ solve
$$\mu(w,\phi)+A(w,\phi)=(f,\phi),\Forall \phi\in \mbV.$$
Putting $v=e^{-b(x+y)/2} w$ and $\phi=e^{-b(x+y)/2} \theta$  (for
$\theta\in \mbV$) in the above
equation and integrating by parts when appropriate,  we see that
$v\in \mbV$ is the solution of
$$\mu(v,\theta)+\tS(v,\theta)=(e^{-b(x+y)/2}f,\theta),\Forall \theta\in
\mbV,$$
i.e.,  
$$e^{-b(x+y)/2} (\mu+A)^{-1}f= (\mu+\tS)^{-1} e^{-b(x+y)/2}f.$$
Substituting this into \eqref{A-beta} shows that
$$A^{-\beta} f = e^{b(x+y)/2} \tS^{-\beta} (e^{-b(x+y)/2}f).$$
Since, for example,  $\sin(\pi x)\sin(2\pi y) $ is an eigenvector  of
$\tS$ with eigenvalue $5\pi^2+b^2/2$, 
$$u:=A^{-\beta} f =
e^{b(x+y)/2} (5\pi^2+b^2/ 2)^{-\beta}\sin(\pi x)\sin(2\pi y)$$
when
\beq
f=e^{b(x+y)/2}\sin(\pi x)\sin(2\pi y).
\label{fanal}
\eeq

The space discretization consists of continuous piecewise bi-linear finite element subordinate to successive quadrilateral refinements of $\Omega$.
Figure \ref{f:convergence_advection} provides the behavior of the errors
$e_h^k:=\| (A^{-\beta} -\cQ^{-\beta}_k(A_h)\pi_h ) f \|$
when the advection coefficient is given by $b=1$ and $f$ is given by
\eqref{fanal} . For a fixed number of quadrature points $e_h^k \sim h^2$ while for a fixed spatial resolution $e_h^k$ is exponential decaying. This is in agreement with the estimate provided in Corollary~\ref{c:fully}.

\begin{figure}
\includegraphics[width=0.45\textwidth]{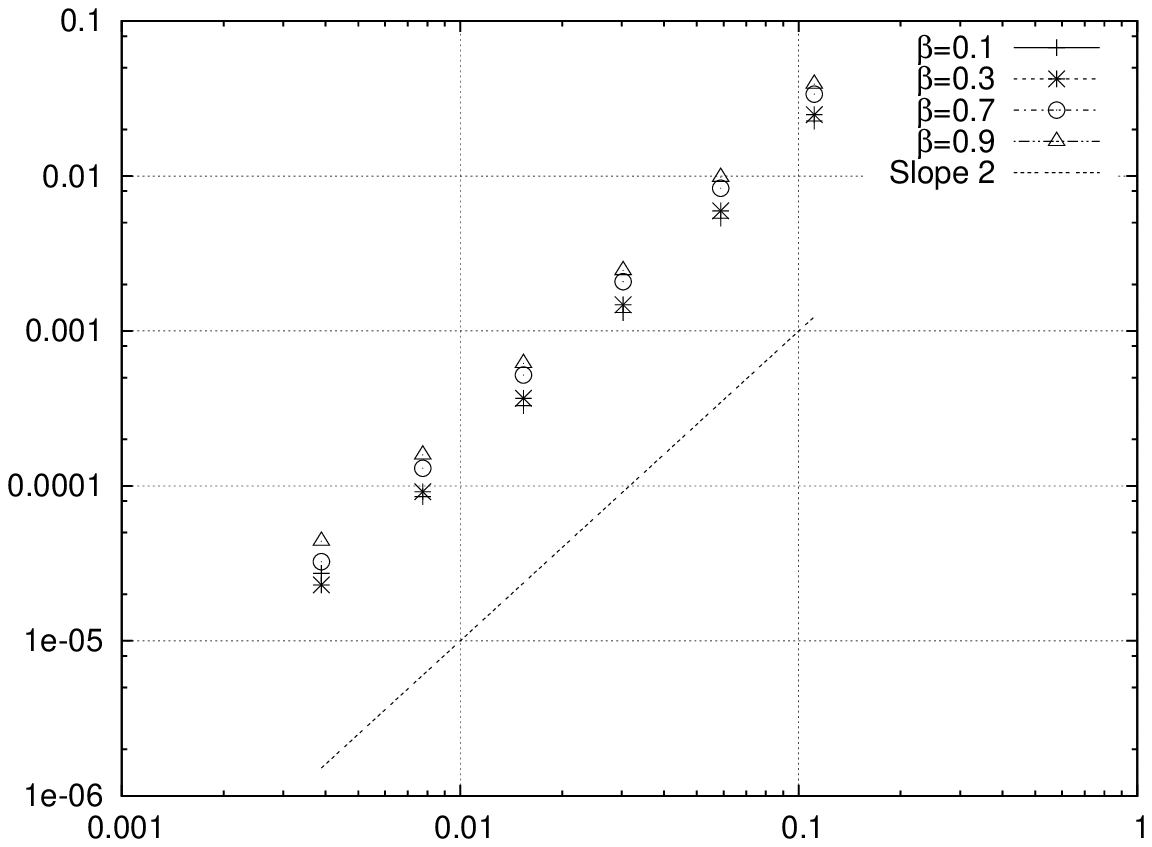}
\includegraphics[width=0.45\textwidth]{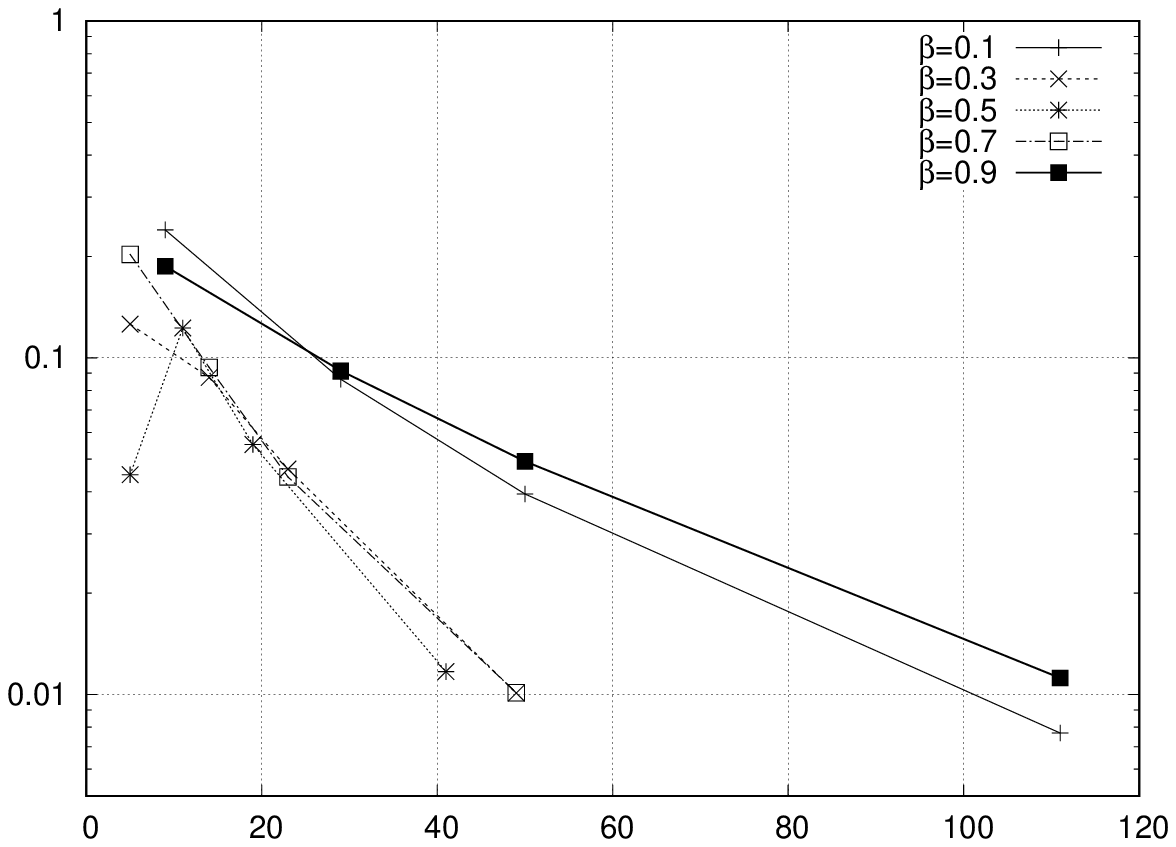}
\caption{(Left) Decay of $e_h^k$ versus the uniform mesh size $h$ \modif{in a log-log plot}. Second order rate of convergence is observed for all values of $\beta$. The number of quadrature points is taken large enough not to interfere with the spatial discretization error. (Right) Exponential decay of $e_h^k$ as a function \modif{of the square root} of number of points $M+N+1$ \modif{in a semi-log plot} (see Remark~\ref{rem:MN} for the definition of $k$). The spatial discretization is fixed and consists in 10 uniform refinements of $\Omega$.}\label{f:convergence_advection}
\end{figure}

We now set $b=10$, $f\equiv 1$ and study the boundary layer inherent to convection-diffusion problems.
For this, we consider 8 successive quadrilateral refinements of $\Omega$ for the space discretization.
In particular, the corresponding mesh size $h:=2^{-8}$ is fine enough for the Galerkin representation not to require any stabilization. 
The value of the approximations $A_h^{-\beta}1$ for $\beta=0.1,0.3,0.5,0.7,0.9$ over the segment joining the points $(0,0)$ and $(1,1)$ are plotted in Figure \ref{f:con_diff} together with the graphs of $5A_h^{-\beta}1$ for $\beta=0.1$ and $\beta=0.9$.
The results indicate that the width of the boundary layer for convection-diffusion problems remains proportional to the ratio diffusion / convection and is therefore independent of $\beta$.
However, its intensity decreases with increasing $\beta$.

\begin{figure}\label{f:con_diff}
\begin{tabular}{cc}
\includegraphics[width=0.45\textwidth]{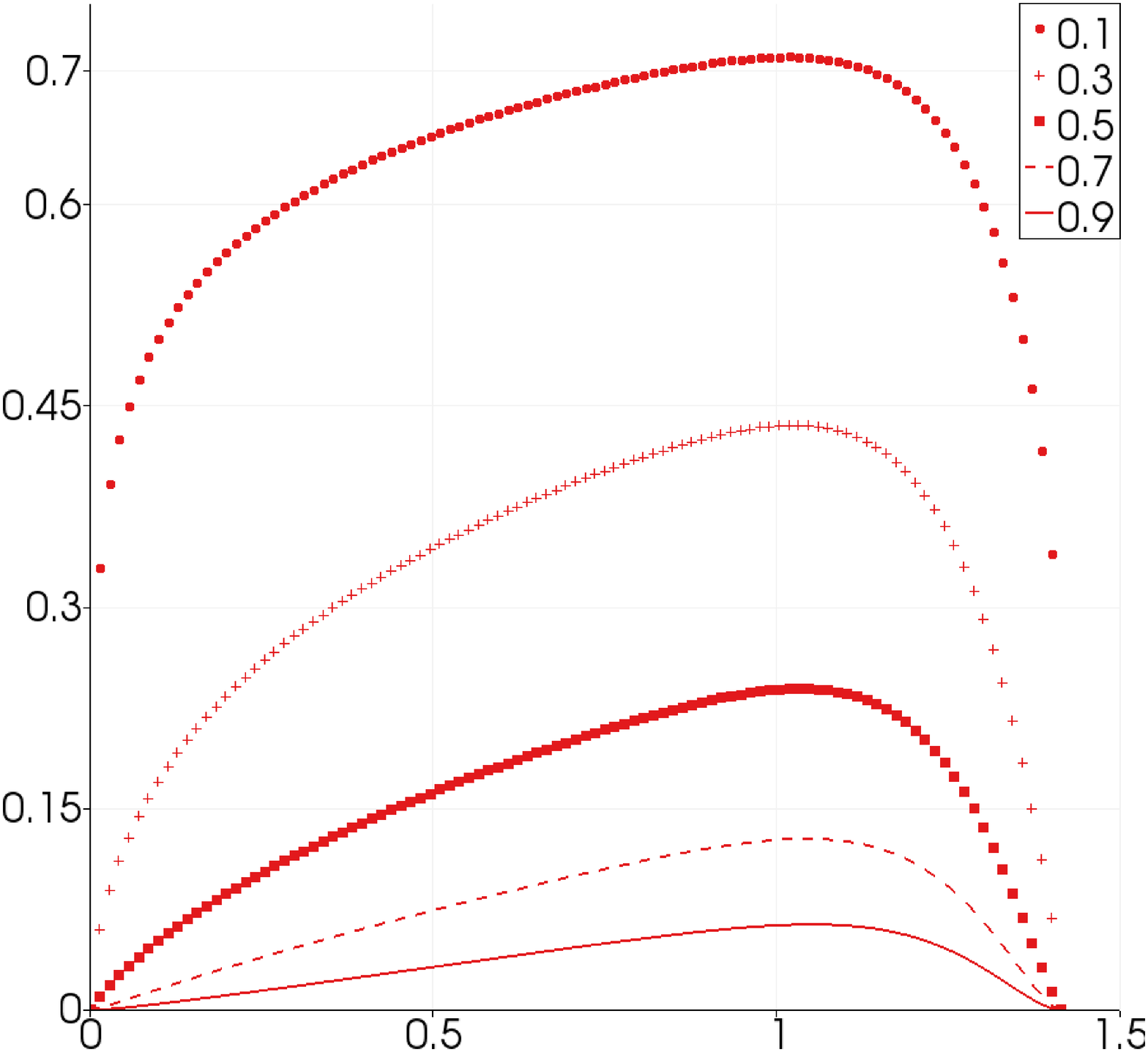}&
\includegraphics[width=0.4\textwidth]{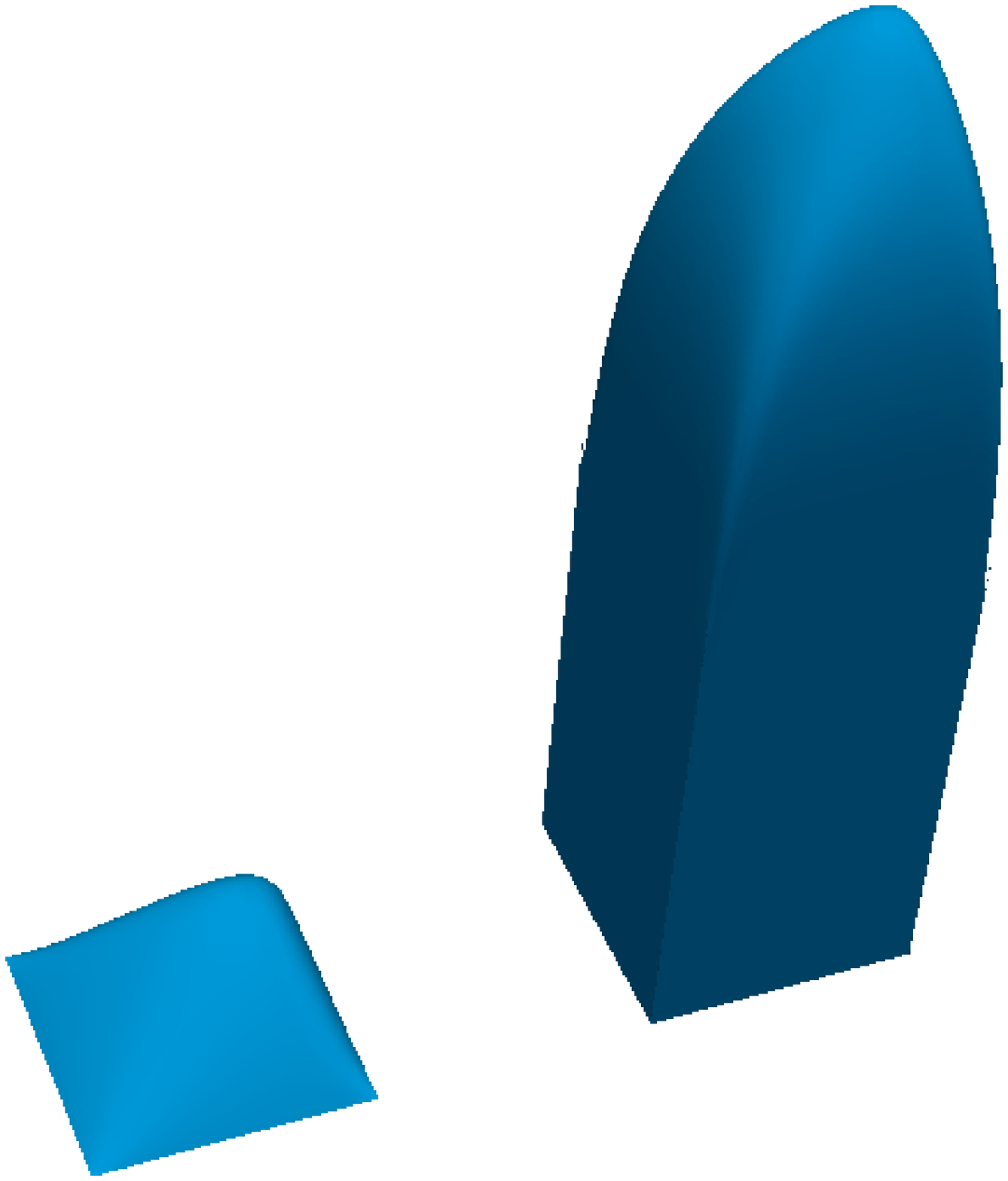}
\end{tabular}
\caption{Approximations of $A^{-\beta}1$ on a subdivision of the unit square using $4^8$ quadrilaterals and $401$ quadrature points.
The width of the boundary layer for convection-diffusion problems appears independent of $\beta$ while its intensity decreases with increasing $\beta$.
(Left) Plots over the segment joining the $(0,0)$ and $(1,1)$ for $\beta=0.1,0.3,0.5,0.7,0.9$.
(Right) Approximations scaled by a factor $5$ for $\beta=0.9$ and $\beta=0.1$.}
\end{figure}

\section*{Acknowledgment}
The first author was
partially  supported by
  the National Science Foundation through Grant DMS-1254618 while the 
second  was partially supported by
  the National Science Foundation through Grant DMS-1216551.
The numerical experiments are performed using the \emph{deal.ii} library \cite{BHK:07} and \emph{paraview} \cite{paraview} is used for the visualization.

\bibliographystyle{plain}

\begin{thebibliography}{10}

\bibitem{Agranovich}
J.S. Agranovich and A.M. Selitskii.
\newblock Fractional powers of operators corresponding to coercive problems in
  {L}ipschitz domains.
\newblock {\em Functional Analysis and Its Applications}, 47:83--95, 2013.

\bibitem{AxelKeithMc}
Andreas Axelsson, Stephen Keith, and Alan McIntosh.
\newblock The {K}ato square root problem for mixed boundary value problems.
\newblock {\em J. London Math. Soc. (2)}, 74(1):113--130, 2006.

\bibitem{brbacuta}
Constantin Bacuta, James~H. Bramble, and Joseph~E. Pasciak.
\newblock Using finite element tools in proving shift theorems for elliptic
  boundary value problems.
\newblock {\em Numer. Linear Algebra Appl.}, 10(1-2):33--64, 2003.
\newblock Dedicated to the 60th birthday of Raytcho Lazarov.

\bibitem{MR2450437}
Oleg~G. Bakunin.
\newblock {\em Turbulence and diffusion}.
\newblock Springer Series in Synergetics. Springer-Verlag, Berlin, 2008.
\newblock Scaling versus equations.

\bibitem{balakrishnan}
V.~Balakrishnan.
\newblock Fractional powers of closed operators and the semi-groups generated
  by them.
\newblock {\em Pacific J. Math.}, 10:419--437, 1960.

\bibitem{BHK:07}
W.~Bangerth, R.~Hartmann, and G.~Kanschat.
\newblock deal.{II}---a general-purpose object-oriented finite element library.
\newblock {\em ACM Trans. Math. Software}, 33(4):Art. 24, 27, 2007.

\bibitem{bankdupont}
Randolph~E. Bank and Todd Dupont.
\newblock An optimal order process for solving finite element equations.
\newblock {\em Math. Comp.}, 36(153):35--51, 1981.

\bibitem{bankyserentant}
Randolph~E. Bank and Harry Yserentant.
\newblock On the {$H\sp 1$}-stability of the {$L\sb 2$}-projection onto finite
  element spaces.
\newblock {\em Numer. Math.}, 126(2):361--381, 2014.

\bibitem{MR2223347}
Peter~W. Bates.
\newblock On some nonlocal evolution equations arising in materials science.
\newblock In {\em Nonlinear dynamics and evolution equations}, volume~48 of
  {\em Fields Inst. Commun.}, pages 13--52. Amer. Math. Soc., Providence, RI,
  2006.

\bibitem{wenyu}
A.~Bonito, W.~Lei, and J.E. Pasciak.
\newblock The approximation of parabolic equations involving fractional powers
  of elliptic operators.
\newblock In preparation.

\bibitem{bp-fractional}
Andrea Bonito and Joseph~E. Pasciak.
\newblock Numerical approximation of fractional powers of elliptic operators.
\newblock {\em Math. Comp.}, 84(295):2083--2110, 2015.

\bibitem{brPstein}
James~H. Bramble, Joseph~E. Pasciak, and Olaf Steinbach.
\newblock On the stability of the {$L\sp 2$} projection in {$H\sp 1(\Omega)$}.
\newblock {\em Math. Comp.}, 71(237):147--156 (electronic), 2002.

\bibitem{MR1052086}
James~H. Bramble, Joseph~E. Pasciak, and Jinchao Xu.
\newblock The analysis of multigrid algorithms with nonnested spaces or
  noninherited quadratic forms.
\newblock {\em Math. Comp.}, 56(193):1--34, 1991.

\bibitem{bramblexu}
James~H. Bramble and Jinchao Xu.
\newblock Some estimates for a weighted {$L\sp 2$} projection.
\newblock {\em Math. Comp.}, 56(194):463--476, 1991.

\bibitem{MR2354493}
Luis Caffarelli and Luis Silvestre.
\newblock An extension problem related to the fractional {L}aplacian.
\newblock {\em Comm. Partial Differential Equations}, 32(7-9):1245--1260, 2007.

\bibitem{ISI:000175019600004}
P~Carr, H~Geman, DB~Madan, and M~Yor.
\newblock {The fine structure of asset returns: An empirical investigation}.
\newblock {\em {JOURNAL OF BUSINESS}}, {75}({2}):{305--332}, {APR} {2002}.

\bibitem{AbnerPost}
Long Chen, Ricardo~H. Nochetto, Enrique Ot{\'a}rola, and Abner~J. Salgado.
\newblock A {PDE} approach to fractional diffusion: a posteriori error
  analysis.
\newblock {\em J. Comput. Phys.}, 293:339--358, 2015.

\bibitem{MR1709781}
Peter Constantin and Jiahong Wu.
\newblock Behavior of solutions of 2{D} quasi-geostrophic equations.
\newblock {\em SIAM J. Math. Anal.}, 30(5):937--948, 1999.

\bibitem{costabel}
Martin Costabel and Monique Dauge.
\newblock Crack singularities for general elliptic systems.
\newblock {\em Math. Nachr.}, 235:29--49, 2002.

\bibitem{dauge}
M.~Dauge.
\newblock {\em Elliptic Boundary Value Problems on Corner Domains}.
\newblock Lecture Notes in Mathematics, 1341, Springer-Verlag, 1988.

\bibitem{MR0521262}
G.~Duvaut and J.-L. Lions.
\newblock {\em Inequalities in mechanics and physics}.
\newblock Springer-Verlag, Berlin, 1976.
\newblock Translated from the French by C. W. John, Grundlehren der
  Mathematischen Wissenschaften, 219.

\bibitem{MR1918950}
A.~Cemal Eringen.
\newblock {\em Nonlocal continuum field theories}.
\newblock Springer-Verlag, New York, 2002.

\bibitem{FujitaSuzuki}
Hiroshi Fujita and Takashi Suzuki.
\newblock Evolution problems.
\newblock In {\em Handbook of numerical analysis, {V}ol.\ {II}}, Handb. Numer.
  Anal., II, pages 789--928. North-Holland, Amsterdam, 1991.

\bibitem{MR2480109}
Guy Gilboa and Stanley Osher.
\newblock Nonlocal operators with applications to image processing.
\newblock {\em Multiscale Model. Simul.}, 7(3):1005--1028, 2008.

\bibitem{MR1173209}
P.~Grisvard.
\newblock {\em Singularities in boundary value problems}, volume~22 of {\em
  Recherches en Math\'ematiques Appliqu\'ees [Research in Applied
  Mathematics]}.
\newblock Masson, Paris; Springer-Verlag, Berlin, 1992.

\bibitem{haase}
Markus Haase.
\newblock {\em The functional calculus for sectorial operators}, volume 169 of
  {\em Operator Theory: Advances and Applications}.
\newblock Birkh\"auser Verlag, Basel, 2006.

\bibitem{paraview}
A.~Henderson, J.~Ahrens, and Ch. Law.
\newblock {\em The ParaView Guide}, kitware inc. edition, 2004.

\bibitem{MR2252038}
M.~Ilic, F.~Liu, I.~Turner, and V.~Anh.
\newblock Numerical approximation of a fractional-in-space diffusion equation.
  {I}.
\newblock {\em Fract. Calc. Appl. Anal.}, 8(3):323--341, 2005.

\bibitem{MR2300467}
M.~Ilic, F.~Liu, I.~Turner, and V.~Anh.
\newblock Numerical approximation of a fractional-in-space diffusion equation.
  {II}. {W}ith nonhomogeneous boundary conditions.
\newblock {\em Fract. Calc. Appl. Anal.}, 9(4):333--349, 2006.

\bibitem{kato1960}
Tosio Kato.
\newblock Note on fractional powers of linear operators.
\newblock {\em Proc. Japan Acad.}, 36:94--96, 1960.

\bibitem{kato1961}
Tosio Kato.
\newblock Fractional powers of dissipative operators.
\newblock {\em J. Math. Soc. Japan}, 13:246--274, 1961.

\bibitem{kellogg}
R.B. Kellogg.
\newblock Interpolation between subspaces of a hilbert space.
\newblock Technical report, Univ. of Maryland,, Inst. Fluid Dynamics and App.
  Math., Tech. Note BN-719, 1971.

\bibitem{Kondratev}
V.~A. Kondrat{\cprime}ev.
\newblock Boundary value problems for elliptic equations in domains with
  conical or angular points.
\newblock {\em Trudy Moskov. Mat. Ob\v s\v c.}, 16:209--292, 1967.

\bibitem{nic1}
Jean Mbaro-Saman Lubuma and Serge Nicaise.
\newblock Regularity of the solutions of {D}irichlet problems in polyhedral
  domains.
\newblock In {\em Boundary value problems and integral equations in nonsmooth
  domains ({L}uminy, 1993)}, volume 167 of {\em Lecture Notes in Pure and Appl.
  Math.}, pages 171--184. Dekker, New York, 1995.

\bibitem{lunardi}
Alessandra Lunardi.
\newblock {\em Interpolation Theory}.
\newblock Edizioni dellaNormale, 2nd edition edition, 2007.

\bibitem{lundbowers}
John Lund and Kenneth~L. Bowers.
\newblock {\em Sinc methods for quadrature and differential equations}.
\newblock Society for Industrial and Applied Mathematics (SIAM), Philadelphia,
  PA, 1992.

\bibitem{MR1255054}
Mihoko Matsuki and Teruo Ushijima.
\newblock A note on the fractional powers of operators approximating a positive
  definite selfadjoint operator.
\newblock {\em J. Fac. Sci. Univ. Tokyo Sect. IA Math.}, 40(2):517--528, 1993.

\bibitem{MR660727}
B.~M. McCay and M.~N.~L. Narasimhan.
\newblock Theory of nonlocal electromagnetic fluids.
\newblock {\em Arch. Mech. (Arch. Mech. Stos.)}, 33(3):365--384, 1981.

\bibitem{mcintosh90}
Alan McIntosh.
\newblock The square root problem for elliptic operators: a survey.
\newblock In {\em Functional-analytic methods for partial differential
  equations ({T}okyo, 1989)}, volume 1450 of {\em Lecture Notes in Math.},
  pages 122--140. Springer, Berlin, 1990.

\bibitem{nazplam}
Sergey Nazarov and Boris Plamenevsky.
\newblock {\em Elliptic problems in domains with piecewise smooth boundaries}.
\newblock De Gruyter expositions in mathematics, De Gruyter, 1994.

\bibitem{nicaise}
Serge Nicaise.
\newblock {\em Polygonal interface problems}, volume~39 of {\em Methoden und
  Verfahren der Mathematischen Physik [Methods and Procedures in Mathematical
  Physics]}.
\newblock Verlag Peter D. Lang, Frankfurt am Main, 1993.

\bibitem{Abner}
Ricardo~H. Nochetto, Enrique Ot{\'a}rola, and Abner~J. Salgado.
\newblock A {PDE} approach to fractional diffusion in general domains: a priori
  error analysis.
\newblock {\em Found. Comput. Math.}, 15(3):733--791, 2015.

\bibitem{ScottZhang}
L.~Ridgway Scott and Shangyou Zhang.
\newblock Finite element interpolation of nonsmooth functions satisfying
  boundary conditions.
\newblock {\em Math. Comp.}, 54(190):483--493, 1990.

\bibitem{MR1727557}
S.~A. Silling.
\newblock Reformulation of elasticity theory for discontinuities and long-range
  forces.
\newblock {\em J. Mech. Phys. Solids}, 48(1):175--209, 2000.

\bibitem{Ushijima}
Teruo Ushijima.
\newblock Error estimates for the lumped mass approximation of the heat
  equation.
\newblock {\em Mem. Numer. Math.}, pages 65--82, 1979.

\bibitem{MR2800568}
Qianqian Yang, Ian Turner, Fawang Liu, and Milos Ili{\'c}.
\newblock Novel numerical methods for solving the time-space fractional
  diffusion equation in two dimensions.
\newblock {\em SIAM J. Sci. Comput.}, 33(3):1159--1180, 2011.

\end{thebibliography}

\def\cprime{$'$}

\end{document}